\newcommand{\D}{\textup{d}}
\newcommand{\dvol}{\textup{dvol}}
\newcommand{\Vol}{\textup{Vol}}
\newcommand{\Div}{\textup{div}}
\newcommand{\tr}{\textup{tr}}
\newcommand{\eps}{\varepsilon}
\newtheorem{thm}{Theorem}[chapter]
\newtheorem{prop}[thm]{Proposition}
\newtheorem{cor}[thm]{Corollary}
\theoremstyle{definition}
\newtheorem{ex}[thm]{Example}
\title{Monotonicity for $p$-harmonic vector bundle-valued $k$-forms}
\author{Ahmad Afuni}
\date{\vskip -5ex}
\begin{document}
\maketitle
\begin{abstract}We investigate monotonicity properties of $p$-harmonic vector bundle-valued $k$-forms by studying the energy-momentum tensor associated with such a form. As a consequence, we obtain a unified proof of the monotonicity formul\ae\ for $p$-harmonic maps and Yang-Mills connections, proving a monotonicity formula for $p$-Yang-Mills connections in the process. Moreover, it is shown how this technique may be adapted to yield an analogous monotonicity formula for Yang-Mills-Higgs pairs. Finally, we obtain Liouville-type theorems for such forms and Yang-Mills-Higgs pairs as an application.\end{abstract}
\chapter{Introduction}
Let $(M,g)$ be an oriented Riemannian manifold of dimension $n>kp$ with $p>1$ and $k\in\mathbb{N}$ fixed, $\Lambda^{k} T^{\ast}M$ the $k$th exterior product bundle of the cotangent bundle of $M$ and $E\rightarrow M$ a finite-dimensional Riemannian vector bundle with connection $\nabla$, exterior covariant differential $\D^{\nabla}$ and codifferential $\delta^{\nabla}$ (see \S\ref{geomsec}).

A smooth section $\psi:M\rightarrow E\otimes\Lambda^{k}T^{\ast}M$ is said to be \textit{$p$-harmonic} if it is $\D^{\nabla}$-closed, i.e.
\begin{align}
       	\D^{\nabla}\psi=0\label{closed}
\end{align}
and $p$-coclosed, i.e.
\begin{align}
\delta^{\nabla}(|\psi|^{p-2}\psi)=0.\label{coclosed}
\end{align}
These equations have been studied by countless others in the case where $E=M\times\mathbb{R}\rightarrow M$ and $\psi=d^{\nabla}v$ (cf. \cite{MR1652927} and the references therein). Besides considering them for their own sake, geometric variational problems such as $p$-harmonic maps and Yang-Mills theory may be recast in this form (see \S\ref{geomsec}); for the former, monotonicity formul\ae\ have been established by Schoen and Uhlenbeck \cite{MR664498} and Hardt and Lin \cite{CPA:CPA3160400503}, and for the latter by Price \cite{price1983monotonicity}. These formul\ae\ are special cases of the following theorem, which was established by Karcher and Wood \cite{MR765831} in the case $p=2$.

\begin{thm} Let $\psi:M\rightarrow E\otimes \Lambda^k T^\ast M$ be a $p$-harmonic section, $x_{0}\in M$, $i_{0}$ the injectivity radius at $x_{0}$ and $\dvol_{g}$ the volume form of $(M,g)$. There exists a constant $\Lambda\geq 0$ depending on the geometry of $B_{i_{0}}(x_0)$ such that the identity
\begin{align}
	\frac{\D}{\D R}\left( \frac{e^{\Lambda R^{2}}}{R^{n-kp}}\int_{B_R (x_0)}\frac{1}{p}|\psi|^{p}\dvol_g\right)\geq 0  
	\end{align}
	holds on $\left]0,\frac{i_{0}}{2}\right[$.
\end{thm}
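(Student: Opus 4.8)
The plan is to base the argument on the energy-momentum (stress-energy) tensor of the $p$-energy functional $\psi\mapsto\frac1p\int_M|\psi|^p\,\dvol_g$. Concretely, I would introduce the symmetric $2$-tensor
\begin{align*}
	S(X,Y)=\frac1p|\psi|^p g(X,Y)-|\psi|^{p-2}\langle\iota_X\psi,\iota_Y\psi\rangle
\end{align*}
and first establish the conservation law $\Div S=0$. This is the one place where both defining equations of a $p$-harmonic section enter: differentiating $S$ and commuting covariant derivatives, the terms reorganise into a pairing of $\iota_X\psi$ with $\delta^{\nabla}(|\psi|^{p-2}\psi)$ and a pairing of $|\psi|^{p-2}\psi$ with $\iota_X\D^{\nabla}\psi$, which vanish by \eqref{coclosed} and \eqref{closed} respectively. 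I expect this divergence computation to be the main obstacle, as it requires a careful Weitzenb\"ock-type manipulation together with correct bookkeeping of the interior-product terms; everything downstream is comparatively mechanical.

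Granting $\Div S=0$, I would next record the trace: in a local orthonormal frame $\{e_i\}$ one has $\sum_i|\iota_{e_i}\psi|^2=k|\psi|^2$, whence
\begin{align*}
	\tr S=\frac np|\psi|^p-k|\psi|^p=\frac{n-kp}{p}|\psi|^p.
\end{align*}
I would then feed the radial field $X=\frac12\nabla r^2=r\,\partial_r$, with $r$ the $g$-distance to $x_0$, into the pointwise identity
\begin{align*}
	\Div\bigl(S(X,\cdot)^\sharp\bigr)=(\Div S)(X)+\langle S,\nabla X\rangle=\langle S,\nabla X\rangle,
\end{align*}
valid on $B_{i_0}(x_0)$, where $r$ is smooth. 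Integrating over $B_R(x_0)$ and invoking the divergence theorem turns the left-hand side into $\int_{\partial B_R}S(X,\nu)\,\D\sigma$; since $X=R\nu$ on the geodesic sphere $\partial B_R$, this equals $R\int_{\partial B_R}\bigl(\frac1p|\psi|^p-|\psi|^{p-2}|\iota_\nu\psi|^2\bigr)\,\D\sigma$.

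The curvature enters only through $\nabla X$. By the Hessian comparison theorem, within the injectivity radius one has $\nabla X=\frac12\operatorname{Hess}(r^2)=g+O(r^2)$, the error being controlled by the sectional curvature bounds on $B_{i_0}(x_0)$, so that $\langle S,\nabla X\rangle=\tr S+\langle S,\nabla X-g\rangle$ with $|\langle S,\nabla X-g\rangle|\le C\,r^2|\psi|^p$ (using $|S|\le C|\psi|^p$). Writing $e(R)=\int_{B_R}\frac1p|\psi|^p\,\dvol_g$, so that $e'(R)=\int_{\partial B_R}\frac1p|\psi|^p\,\D\sigma$ by the coarea formula, the integral identity of the previous paragraph rearranges to
\begin{align*}
	R\,e'(R)-(n-kp)e(R)=R\int_{\partial B_R}|\psi|^{p-2}|\iota_\nu\psi|^2\,\D\sigma+\mathcal E(R),\qquad|\mathcal E(R)|\le C\,R^2e(R),
\end{align*}
whose right-hand side is bounded below by $-C\,R^2e(R)$ since the boundary integral is nonnegative. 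Choosing $\Lambda=\frac C2$, a constant depending only on the geometry of $B_{i_0}(x_0)$, and expanding
\begin{align*}
	\frac{\D}{\D R}\left(\frac{e^{\Lambda R^2}}{R^{n-kp}}e(R)\right)=\frac{e^{\Lambda R^2}}{R^{n-kp+1}}\bigl(R\,e'(R)-(n-kp)e(R)+2\Lambda R^2e(R)\bigr),
\end{align*}
the bracketed factor equals $R\,e'(R)-(n-kp)e(R)+C\,R^2e(R)\ge0$ on $\left]0,\frac{i_0}2\right[$, which is precisely the asserted monotonicity.
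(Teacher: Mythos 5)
Your proposal is correct and follows essentially the same route as the paper: the stress-energy tensor (yours is the negative of the paper's $T_\psi^g$, a harmless sign convention), its divergence identity pairing $\iota_X\psi$ with $\delta^{\nabla}(|\psi|^{p-2}\psi)$ and $|\psi|^{p-2}\psi$ with $\iota_X\D^{\nabla}\psi$, contraction with $r\nabla r$, the divergence theorem, Hessian comparison, the coarea formula, and the integrating factor $e^{\Lambda R^{2}}$. The only minor difference is that the paper retains the favourable sign of the radial term $|\psi|^{p-2}|\iota_{\nabla r}\psi|^{2}$ in the Hessian-comparison estimate, whereas you absorb everything into a single $C R^{2}e(R)$ error, which still suffices for the stated inequality.
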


The purpose of this note is to establish this theorem by exploiting a divergence identity arising from the so-called \textit{energy-momentum tensor} associated to the integrand, thus reproving the now well-known monotonicity formul\ae\ mentioned earlier as well as proving new ones for $p$-Yang-Mills connections and, by suitably modifying our setup, Yang-Mills-Higgs pairs. This approach was motivated by a paper of Alikakos \cite{alikakos2011some} where an energy-momentum tensor was used to establish a monotonicity formula for a certain semilinear elliptic system in $\mathbb{R}^{n}$. The tensor itself, however, is of independent interest, playing a major role in the theory of relativity \cite{MR0143451} and having been studied in the context of harmonic maps by Eells and Baird \cite{MR655417} and various others. In a forthcoming paper, it shall be shown how this identity may be used to establish local monotonicity formul\ae\ for related geometric flows.
\vskip 2mm
\noindent{\small\textit{Acknowledgements.} This research was mostly carried out as part of the author's doctoral thesis at the Free University of Berlin under the supervision of Klaus Ecker, to whom much gratitude is due. The author gratefully acknowledges financial support from the Max Planck Institute for Gravitational Physics and the Leibniz Universit\"at Hannover.}
\chapter{Geometric setup and problems of note}\label{geomsec}
We begin by giving the geometric setup underlying this paper, fixing notation in the process. As a rule, we follow the conventions of \cite{walter1981poor}.

As in the introduction, we shall assume that $(M^{n},g)$ is an oriented Riemannian manifold of dimension \textit{greater} than $kp$ ($p>1$ and $k\in\mathbb{N}$ fixed) with volume form $\dvol_{g}$ and furthermore write $TM$ for its tangent bundle, $T^{\ast}M$ for its cotangent bundle, $\Lambda^{k}T^{\ast}M$ for the $k$th exterior product bundle of $T^{\ast}M$ and $(\Lambda T^{\ast}M,\wedge)$ for the exterior algebra bundle of $T^{\ast}M$ with wedge product $\wedge$; all of these bundles naturally admit Riemannian metrics induced by that on $TM$. Moreover, we suppose $E\rightarrow M$ is a finite-dimensional Riemannian vector bundle equipped with a connection $\nabla$ and write $\left<\cdot,\cdot\right>$ for the Riemannian metric on $E$ and, more generally, for the metrics canonically induced on bundles `naturally' constructed from $E$ and $TM$, writing $|\cdot|$ for the associated norm in all cases and $\nabla$ for any connection naturally induced by the connection on $E$ and the Levi-Civita connection on $TM$. With these conventions, $\nabla$ is compatible with all of the inner products $\left<\cdot,\cdot\right>$ to be considered in this paper. If $E_{0}\rightarrow M$ is any vector bundle, we write $\Gamma(E_{0})$ for the $C^{\infty}(M)$-module of all smooth sections of $E_{0}$. Throughout this paper, we work in the \textit{smooth} category.

Associated to $\nabla$ is the so-called \textit{exterior covariant derivative} $\D^{\nabla}:\Gamma(E\otimes \Lambda T^{\ast}M)\rightarrow\Gamma(E\otimes\Lambda T^{\ast}M)$ given by
\begin{align*}
	\D^{\nabla} = \sum_{i=1}^{n}\omega^{i}\wedge \nabla_{\eps_{i}}
\end{align*}
in any local frame $\{\eps_{i}\}_{i=1}^{n}$ for $TM$ with dual coframe $\{\omega^{i}\}_{i=1}^{n}$ for $T^{\ast}M$. Moreover, writing $\iota_{X}:\Gamma(E\otimes\Lambda T^{\ast}M)\rightarrow\Gamma(E\otimes\Lambda T^{\ast}M)$ for the interior product associated to a vector field $X\in\Gamma(TM)$, we define the associated \textit{codifferential} $\delta^{\nabla}:\Gamma(E\otimes\Lambda T^{\ast}M)\rightarrow \Gamma(E\otimes \Lambda T^{\ast}M)$ by
\begin{align*}
	\delta^{\nabla} = -\sum_{i=1}^{n}\iota_{\eps_{i}}\circ\nabla_{\eps_{i}}
\end{align*}
in any local \textit{orthonormal} frame $\{\eps_{i}\}_{i=1}^{n}$ for $TM$, which arises as the adjoint to $\D^{\nabla}$ with respect to the canonical $L^{2}$-inner product associated to $\left<\cdot,\cdot\right>$ acting on compactly supported sections of $E\otimes\Lambda T^{\ast}M$. Explicitly,
\begin{align*}
	\int_{M}\left<\D^{\nabla}\psi_{1},\psi_{2}\right>\dvol_{g}=\int_{M}\left<\psi_{1},\delta^{\nabla}\psi_{2}\right>\dvol_{g}
\end{align*}
whenever $\psi_{1},\psi_{2}\in\Gamma(E\otimes\Lambda T^{\ast}M)$ are compactly supported. Since all of these operators are local, we shall freely apply them to local sections.

We now proceed to mention a few examples of systems that may be written in the form (\ref{closed})-(\ref{coclosed}).
\begin{ex}[$k\in\mathbb{N}$: $p$-harmonic forms] If $E=M\times\mathbb{R}\rightarrow M$ with inner product given by fibrewise multiplication and equipped with the usual flat connection then, with the identification $E\otimes\Lambda T^{\ast}M\cong \Lambda T^{\ast}M$, $\D^{\nabla}$ and $\delta^{\nabla}$ reduce to the usual exterior differential and codifferential of Hodge theory and the equations (\ref{closed})-(\ref{coclosed}) define a \textit{$p$-harmonic $k$-form} $\psi\in\Gamma(\Lambda^{k}T^{\ast}M)$; such forms arise as solutions to the variational problem
\begin{align*}
	\frac{1}{p}\int_{M}|\omega|^{p}\dvol_{g}\rightarrow\textup{min!}
\end{align*}
considered over the class of all \textit{closed} $k$-forms $\omega$ on which this integral is finite.
\end{ex}
\begin{ex}[$k=1$: $p$-harmonic maps {\cite{fuchs89}}]\label{hmex} Let $u:M\rightarrow (N,g_{N})$ be a smooth mapping of Riemannian manifolds and $E=u^{-1}TN$ the pullback of $TN$ by $u$. The Riemannian metric and Levi-Civita connection on $TN$ induce a Riemannian metric $\left<\cdot,\cdot\right>$ and connection $\nabla$ respectively on $u^{-1}TN$, the latter of which is compatible with the Riemannian metric. Write $\psi=\D u$ for the differential of $u$, here considered a section of $u^{-1}TN\otimes T^{\ast}M$. The condition $\D^{\nabla}\D u=0$ holds for all $u$ as a consequence of the connection on $TN$ being torsion-free. On the other hand, if $\psi$ satisfies (\ref{coclosed}), $u$ is said to be a \textit{$p$-harmonic map} (simply a \textit{harmonic map} when $p=2$). These maps arise as solutions to the variational problem
\begin{align*}
	\frac{1}{p}\int_{M}|\D v|^{p}\dvol_{g}\rightarrow\textup{min!}
\end{align*}
considered over the class of all smooth maps $v:M\rightarrow N$ on which this integral is finite.
\end{ex}
\begin{ex}[$k=2$: $p$-Yang-Mills connections {\cite{rivkes08}}]\label{ymex} Suppose $G\rightarrow P\rightarrow M$ is a principal fibre bundle with compact connected semi-simple structure group $G$ with Lie algebra $\mathfrak{g}$ and write $E$ for the vector bundle associated to $P$ and the adjoint representation of $G$ on $\mathfrak{g}$. Minus the Killing form induces a Riemannian metric $\left<\cdot,\cdot\right>$ on $E$; moreover, given a connection $\omega$ on $P$, realised here as a $\mathfrak{g}$-valued one-form on $P$, there is a natural associated connection $\nabla$ on $E$ which is compatible with $\left<\cdot,\cdot\right>$. Writing $\psi=\underline{\Omega}^{\omega}\in\Gamma(E\otimes \Lambda^{2}T^{\ast}M)$ for the \textit{curvature two-form} associated to $\omega$, we say that $\omega$ is a \textit{$p$-Yang-Mills connection} (simply a \textit{Yang-Mills connection} when $p=2$) if $\psi$ solves (\ref{coclosed}). Similarly to the preceding example, $\psi$ satisfies (\ref{closed}) for all $\omega$, the statement of which is known as the \textit{Bianchi identity}. Such connections arise as solutions to the variational problem
\begin{align*}
	\frac{1}{p}\int_{M}|\underline{\Omega}^{\xi}|^{p}\dvol_{g}\rightarrow\textup{min!}
\end{align*}
considered over the class of all connections $\xi$ on $P$ on which this integral is finite.
\end{ex}
Common to all of the above examples is an energy of the form $\frac{1}{p}\int_{M}|\psi|^{p}\dvol_{g}$, thus suggesting that a closer study of the integrand might be fruitful. We henceforth assume that $p>1$ is fixed and write $e_{g}(\psi)=\frac{1}{p}|\psi|^{p}$ for the energy density, explicitly indicating the metric $g$ and vector bundle-valued form $\psi\in\Gamma(E\otimes \Lambda^{k}T^{\ast}M)$ to be assumed given in all of the discussions to follow and, whenever necessary, explicitly indicating the dependence of the inner product on $E\otimes\Lambda T^{\ast}M$ on the metric $g$ by writing it as $\left<\cdot,\cdot\right>_{g}$.
\chapter{The energy-momentum tensor}\label{emsec}
We now proceed to investigate the dependence of the energy density $e_{g}(\psi)$ on the metric $g$. Throughout this section we make use of the canonical bilinear pairing $E\otimes\Lambda T^{\ast}M\times \Lambda TM \rightarrow E$, also known as `evaluating bundle-valued forms on vectors,' which we write simply as $(\cdot,\cdot)$. Moreover, we write $\eps_{J}=\eps_{j_{1}}\wedge\dots\wedge\eps_{j_{l}}=\eps_{j_{1}\dots j_{l}}$ whenever $\{\eps_{i}\}_{i=1}^{n}$ is a local frame for $TM$, $J=(j_{1},\dots,j_{l})$ an $l$-multi-index for $l\in\mathbb{N}$ and similarly for $\omega^{J}$ whenever $\{\omega^{i}\}_{i=1}^{n}$ is a local frame for $T^{\ast}M$. We always understand sums over multi-indices $J$ to be over \textit{increasing} multi-indices and write $J^{l}$ for $J$ whenever necessary.

We begin with a proposition stating how the energy density, considered as an $n$-form, varies with $g$.
\begin{prop}\label{emdef} The unique (symmetric) tensor  $T_{\psi}^{g}\in\Gamma(T^{\star}M\otimes T^{\star}M)$ satisfying
\begin{align*}
\left.\frac{\D}{\D t}\right|_{t=0}e_{g(t)}(\psi)(x)\dvol_{g(t)}(x)=\left<-\frac{1}{2}T_{\psi}^{g}(x),h(x)\right>_{g}\dvol_{g}(x)
\end{align*}
for all $x\in M$ whenever $\{g(t)\in\Gamma(T^{\ast}M\otimes T^{\ast}M)\}_{t\in\left]-\eps,\eps\right[}$ is a smooth one-parameter family of metrics with $g(0)_{p}=g_{p}$ and $\left.\frac{\D}{\D t}\right|_{t=0}g(t)_{p}=h(p)$ for all $p\in M$ is given in any local frame $\{\eps_{i}\}\leftrightarrow\{\omega^{i}\}$ by
\begin{align*}
T_{\psi}^{g}&=|\psi|^{p-2}\sum_{i,j=1}^{n}\left<\iota_{\eps_{i}}\psi,\iota_{\eps_{j}}\psi\right>\omega^{i}\otimes \omega^{j} - e_{g}(\psi)g.
\end{align*}
\end{prop}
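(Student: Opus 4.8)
The plan is to differentiate the integrand via the product rule and treat the variation of the volume form and of the pointwise norm separately. Since the assertion is entirely pointwise and algebraic — the section $\psi$, the connection $\nabla$ and the metric on $E$ play no role in the $t$-dependence — I would fix $x\in M$ and work in a local frame $\{\eps_{i}\}$ that is \emph{independent of $t$} and which I may arrange to be $g$-orthonormal at $x$. The crucial point is that, in such a frame, the evaluations $(\psi,\eps_{i_{1}\dots i_{k}})\in E$ and their $E$-inner products carry no $t$-dependence, so all of it is isolated in the inverse metric coefficients $g(t)^{ij}$ entering the induced metric on $\Lambda^{k}T^{\ast}M$, and in $\dvol_{g(t)}$.

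First I would record the standard first variation of the volume form, namely $\left.\frac{\D}{\D t}\right|_{t=0}\dvol_{g(t)}=\frac{1}{2}\tr_{g}(h)\,\dvol_{g}=\frac{1}{2}\langle g,h\rangle_{g}\,\dvol_{g}$, obtained by differentiating $\sqrt{\det g(t)}$ through Jacobi's formula and noting $g^{ij}h_{ij}=\langle g,h\rangle_{g}$. Multiplying by $e_{g}(\psi)$ reproduces precisely the summand $-e_{g}(\psi)g$ of $T_{\psi}^{g}$, since $\langle-\frac{1}{2}(-e_{g}(\psi)g),h\rangle_{g}=\frac{1}{2}e_{g}(\psi)\langle g,h\rangle_{g}$.

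The heart of the argument is the variation of the energy density. By the chain rule $\left.\frac{\D}{\D t}\right|_{0}e_{g(t)}(\psi)=\frac{1}{2}|\psi|^{p-2}\left.\frac{\D}{\D t}\right|_{0}|\psi|_{g(t)}^{2}$, so everything reduces to computing $\left.\frac{\D}{\D t}\right|_{0}|\psi|_{g(t)}^{2}$. I would express the squared norm in the fixed frame as
\begin{align*}
|\psi|_{g(t)}^{2}=\frac{1}{k!}\sum_{i_{1},\dots,j_{k}}\langle(\psi,\eps_{i_{1}\dots i_{k}}),(\psi,\eps_{j_{1}\dots j_{k}})\rangle\,g(t)^{i_{1}j_{1}}\cdots g(t)^{i_{k}j_{k}},
\end{align*}
the sum being over all (not necessarily increasing) indices, where only the $g(t)^{ij}$ depend on $t$. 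Differentiating with $\left.\frac{\D}{\D t}\right|_{0}g^{ij}=-h_{ij}$ (evaluated at $x$ in the orthonormal frame) and applying the product rule produces $k$ terms that coincide by the antisymmetry of $\psi$; collapsing the surviving Kronecker deltas and rewriting $(\psi,\eps_{i_{1}\dots i_{k-1}i})$ as $\pm(\iota_{\eps_{i}}\psi,\eps_{i_{1}\dots i_{k-1}})$ then yields $\left.\frac{\D}{\D t}\right|_{0}|\psi|^{2}=-\sum_{i,j}\langle\iota_{\eps_{i}}\psi,\iota_{\eps_{j}}\psi\rangle h_{ij}=-\langle S,h\rangle_{g}$, where $S=\sum_{i,j}\langle\iota_{\eps_{i}}\psi,\iota_{\eps_{j}}\psi\rangle\,\omega^{i}\otimes\omega^{j}$. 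This delivers the first summand $|\psi|^{p-2}S$ of $T_{\psi}^{g}$, and assembling the two contributions gives the stated formula.

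I expect the main obstacle to be this last computation, where the bookkeeping in converting the differentiated norm into interior products is delicate. Two points require care: first, verifying that all $k$ terms arising from differentiating the product $g^{i_{1}j_{1}}\cdots g^{i_{k}j_{k}}$ contribute equally, which rests on $\psi$ being alternating; and second, tracking the sign when moving the distinguished index into the first slot to form $\iota_{\eps_{i}}\psi$, where the factor $(-1)^{k-1}$ appears squared and therefore cancels. Finally, symmetry of $T_{\psi}^{g}$ is manifest from the formula, while uniqueness follows since the admissible variations $h$ exhaust the symmetric $2$-tensors, so that the pairing $h\mapsto\langle T,h\rangle_{g}$ determines the symmetric part of $T$, and hence $T_{\psi}^{g}$ itself, uniquely.
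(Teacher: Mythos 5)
Your argument is correct and follows the same skeleton as the paper's proof: both split the variation into the volume-form term (Jacobi's formula giving $\tfrac12\langle g,h\rangle_g\dvol_g$) and the variation of $|\psi|^2_{g(t)}$ in a fixed frame, then assemble via the chain rule $\left.\frac{\D}{\D t}\right|_{0}e_{g(t)}(\psi)=\tfrac12|\psi|^{p-2}\left.\frac{\D}{\D t}\right|_{0}\langle\psi,\psi\rangle_{g(t)}$. The one place where you genuinely diverge is the central computation of $\left.\frac{\D}{\D t}\right|_{0}|\psi|^2_{g(t)}$: the paper writes the norm as a sum over increasing multi-indices weighted by Gram determinants $\det\bigl((g^{*}(t),\omega^{i_r}\otimes\omega^{j_s})\bigr)$, differentiates by cofactor expansion, and then needs a somewhat laborious symmetrization over permutations of the multi-indices to arrive at $-\sum_{i,j}\langle\iota_{\eps_i}\psi,\iota_{\eps_j}\psi\rangle h_{ij}$; you instead use the full-index representation with the $1/k!$ normalization and the product $g(t)^{i_1j_1}\cdots g(t)^{i_kj_k}$, so the product rule yields $k$ terms that coincide by antisymmetry of $\psi$, and the surviving Kronecker deltas collapse via $\sum_{i_2,\dots,i_k}\langle(\alpha,\eps_{i_2\dots i_k}),(\beta,\eps_{i_2\dots i_k})\rangle=(k-1)!\langle\alpha,\beta\rangle$. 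This trades the determinant and permutation bookkeeping for a more direct product-rule computation and is arguably cleaner; both are valid. Two small points to make explicit in a write-up: the general variation is $\left.\frac{\D}{\D t}\right|_{0}g(t)^{ij}=-g^{ia}g^{jb}h_{ab}$, which equals $-h_{ij}$ only at the point $x$ where the frame is orthonormal (as you note), and your uniqueness remark is fine since every symmetric $2$-tensor arises as $\left.\frac{\D}{\D t}\right|_{0}g(t)$ for some admissible family of metrics.
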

\begin{proof}
	On the one hand, it is clear that
\begin{align*}
\left.\frac{\D}{\D t}\right|_{t=0}\dvol_{g(t)}(x)=\left(\frac{1}{2}\left<g,h\right>_{g}\dvol_{g}\right)(x)
\end{align*}
for all $x\in M$ for, computing in a co\"ordinate neighbourhood and writing $g_{ij}$ and $h_{ij}$ ($i,j\in\{1,\dots,n\}$) for the components of $g$ and $h$ so that $\dvol_{g}=\sqrt{\det(g_{ij})}\D x$, we have
\begin{align*}
	\left.\frac{\D}{\D t}\right|_{t=0}\dvol_{g(t)}(x)&=\left(\frac{1}{2\sqrt{\det(g_{ij})}}\sum\limits_{i,j}g^{ji}\det(g_{ij}) h_{ij}\D x\right)(x)\\
	&=\left(\frac{1}{2}\sqrt{\det(g_{ij})}\sum\limits_{i,j}g^{ij}h_{ij}\D x\right)(x)=\left(\frac{1}{2}\mbox{tr}_{g}h\dvol_{g}\right)(x).
\end{align*}
On the other hand, by the equality
\begin{align}
\left.\frac{\D}{\D t}\right|_{t=0}\left<\psi,\psi\right>_{g(t)}&=\left<-\sum_{i,j=1}^{n}\left(\left<\iota_{\eps_{i}}\psi,\iota_{\eps_{j}}\psi  \right>\right)\omega^{i}\otimes\omega^{j},h \right>_{g},\label{varip}
\end{align}
the result follows from
\begin{align*}
	\left.\frac{\D}{\D t}\right|_{t=0}e_{g(t)}(\psi)&=\frac{1}{2}|\psi|^{p-2}\left.\frac{\D}{\D t}\right|_{t=0}\left<\psi,\psi\right>_{g(t)}.
\end{align*}
To establish (\ref{varip}), we fix a local $g$-orthonormal frame $\{\eps_{i}\}$ for $TM$ with dual coframe $\{\omega^{i}\}$ for $T^{\ast}M$ and note that
\begin{align*}
\left<\psi,\psi\right>_{g(t)}&=\sum_{I^{k},J^{k}}\left<(\psi,\eps_{I}),(\psi,\eps_{J})\right>_{g(t)}\cdot\det\left((g^{\ast}(t),\omega^{i_{r}}\otimes \omega^{j_{s}})\right)_{r,s=1}^{k}
\end{align*}
where $g^{\ast}(t)$ is the metric on $T^{\ast}M$ induced by $g(t)$, and
\begin{align*}
&\left.\partial_{t}\right|_{t=0}\left(\det\left((g^{\ast}(t),\omega^{i_{r}}\otimes\omega^{j_{s}})\right)   \right)\\
&=-\sum_{r,s=1}^{k}(h,\eps_{i_{r}}\otimes\eps_{j_{s}})\cdot(-1)^{r+s}\left<\omega^{i_{1}\dots i_{r-1}\widehat{i_{r}}i_{r+1}\dots i_{k}},\omega^{j_{1}\dots j_{s-1}\widehat{j_{s}}j_{s+1}\dots j_{k}}\right>,
\end{align*}
where $\widehat{\cdot}$ denotes omission, whence, noting that $\omega^{i_{1}\dots i_{r-1}\widehat{i_{r}}i_{r+1}\dots i_{k}}=(-1)^{r+1}\iota_{\eps_{i_{r}}}\omega^{I}$,
\begin{align*}
&\left.\partial_{t}\right|_{t=0}\left<\psi,\psi\right>_{g(t)}\\
&=-\sum_{I^{k},J^{k}}\sum_{r,s=1}^{k}\left<(\psi,\eps_{I}),(\psi,\eps_{J})\right>\cdot (h,\eps_{i_{r}}\otimes\eps_{j_{s}})\cdot\left<\iota_{\eps_{i_{r}}}\omega^{I},\iota_{\eps_{j_{s}}}\omega^{J}\right>.
\end{align*}
The inner sum is invariant under permutations of $I$ and $J$, i.e. under $I\rightarrow\sigma(I)$, $J\rightarrow\tau(J)$ for any $\sigma,\tau:\{1,\dots,k\}\rightarrow\{1,\dots,k\}$ bijective. We proceed with this in mind:
\begin{align}
=-\frac{1}{(k!)^{2}}\sum_{\substack{i_{1},\dots,i_{k}\\j_{1},\dots,j_{k}}}\sum_{r,s=1}^{k}\left<(\psi,\eps_{I}),(\psi,\eps_{J})\right>\cdot (h,\eps_{i_{r}}\otimes\eps_{j_{s}})\cdot\left<\iota_{\eps_{i_{r}}}\omega^{I},\iota_{\eps_{j_{s}}}\omega^{J}\right>.\label{permthing}
\end{align}
Now, interchanging sums and fixing $r,s$, we note that the inner summand may be written as, writing $\sigma(I)=(i_{r},i_{1},\dots,\widehat{i_{r}},\dots,i_{k})$ and $\tau(J)=(j_{s},j_{1},\dots,\widehat{j_{s}},\dots,j_{k})$,
\begin{align*}
&\sum_{\substack{i_{1},\dots,i_{k}\\j_{1},\dots,j_{k}}}(-1)^{r+s}\left<(\psi,\eps_{\sigma(I)}),(\psi,\eps_{\tau(J)})\right>\cdot (h,\eps_{(\sigma(I))_{1}}\otimes\eps_{\tau(J)_{1}})\cdot(-1)^{r+s}\left<\iota_{\eps_{\sigma(I)_{1}}}\omega^{\sigma(I)},\iota_{\eps_{\tau(J)_{1}}}\omega^{\tau(J)}\right>\\
&=\sum_{\substack{i_{1},\dots,i_{k}\\j_{1},\dots,j_{k}}}\left<(\iota_{\eps_{i_{1}}}\psi,\eps_{i_{2}\dots i_{k}}),(\iota_{\eps_{j_{1}}}\psi,\eps_{i_{2}\dots i_{k}})\right>\cdot (h,\eps_{i_{1}}\otimes\eps_{j_{1}})\cdot\left<\iota_{\eps_{i_{1}}}\omega^{I},\iota_{\eps_{j_{1}}}\omega^{J}\right>,
\end{align*}
where we have made a change of variables. Noting now that this expression does not depend on $r$ and $s$ so that, summing over $r$ and $s$, we obtain $k^{2}$ of these sums and, treating $i_{1}$ and $j_{1}$ as separate variables from the other $i_{\cdot}$ and $j_{\cdot}$, we proceed from (\ref{permthing}), rewriting the outer sum in terms of increasing multi-indices, to obtain
\begin{align*}
&-\sum_{P^{k-1},Q^{k-1}}\sum_{i_{1},j_{1}=1}^{n}\left<(\iota_{\eps_{i_{1}}}\psi,\eps_{P}),(\iota_{\eps_{j_{1}}}\psi,\eps_{Q})\right>(h,\eps_{i_{1}}\otimes\eps_{j_{1}})\underbrace{\left<\omega^{P},\omega^{Q}\right>}_{\delta^{PQ}}\\
&=-\sum_{i,j=1}^{n}\left<\iota_{\eps_{i}}\psi,\iota_{\eps_{j}}\psi\right>\left(h,\eps_{i}\otimes\eps_{j}\right)\\
&=-\left<h,\sum_{i,j=1}^{n}\left<\iota_{\eps_{i}}\psi,\iota_{\eps_{j}}\psi\right>\omega^{i}\otimes\omega^{j}\right>,
\end{align*}
which is independent of the choice of frame.

\end{proof}
We call $T_{\psi}^{g}$ the \textit{energy-momentum tensor} associated to $e_{g}(\psi)$. In \cite{alikakos2011some}, Alikakos considered the system
\begin{align}
	\Delta u - \nabla W(u)=0\label{semilineqn}
\end{align}
for $u\in C^{2}\left(\mathbb{R}^{n},\mathbb{R}^{n}\right)$ and $W\in C^{2}(\mathbb{R}^{n},\mathbb{R}^{+})$, which is naturally associated to the energy
\begin{align*}
\int_{\mathbb{R}^{n}}\frac{1}{2}|\D u|^{2} + W\circ u;
\end{align*}
there, the energy-momentum tensor is
\begin{align*}
T_{ij}&=\partial_{i}u\cdot\partial_{j}u -\left(\frac{1}{2}|\D u|^{2} + W\circ u   \right)\delta_{ij},
\end{align*}
which was shown to enjoy the property $\Div\ T=0$ that ultimately led to a monotonicity formula. This suggests that computing the divergence of $T_{\psi}^{g}$ should lead to something useful.
\begin{prop}\label{emdiv} In any local frame $\{\eps_{i}\}\leftrightarrow\{\omega^{i}\}$ for $TM$ and $T^{\ast}M$,
\begin{align*}
\mbox{div}\ T_{\psi}^{g}&=-\sum_{j=1}^{n}\left(\left<\delta^{\nabla}(|\psi|^{p-2}\psi),\iota_{\eps_{j}}\psi\right> + \left<|\psi|^{p-2}\iota_{\eps_{j}}d^{\nabla}\psi,\psi\right>\right)\omega^{j}.
\end{align*}
\end{prop}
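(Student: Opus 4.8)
The plan is to establish the identity pointwise at an arbitrary $x_{0}\in M$ with $\psi(x_{0})\neq 0$, working in a local orthonormal frame $\{\eps_{i}\}$ that is geodesic normal at $x_{0}$, so that $\nabla_{\eps_{i}}\eps_{j}$ vanishes at $x_{0}$ and the interior products $\iota_{\eps_{k}}$ commute with the induced connection there; since the asserted identity involves only smooth objects where $\psi\neq 0$, it then holds on the open set $\{\psi\neq 0\}$ (and, for $p\geq 2$, everywhere by continuity). Writing $\Div\ T_{\psi}^{g}=\sum_{j}\bigl(\sum_{i}\nabla_{\eps_{i}}(T_{\psi}^{g})_{ij}\bigr)\omega^{j}$ and using the symmetry of $T_{\psi}^{g}$, I would split the tensor into its two summands from Proposition \ref{emdef} and differentiate each.

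For the metric term $-e_{g}(\psi)g=-\tfrac{1}{p}|\psi|^{p}g$, the fact that $g=\sum_{i}\omega^{i}\otimes\omega^{i}$ in the chosen frame reduces its divergence to $-\nabla_{\eps_{j}}e_{g}(\psi)\,\omega^{j}$, and the chain rule gives $\tfrac{1}{p}\nabla_{\eps_{j}}|\psi|^{p}=|\psi|^{p-2}\langle\nabla_{\eps_{j}}\psi,\psi\rangle$. For the principal term, set $S_{ij}=|\psi|^{p-2}\langle\iota_{\eps_{i}}\psi,\iota_{\eps_{j}}\psi\rangle$; the product rule, together with $\nabla_{\eps_{i}}\iota_{\eps_{k}}=\iota_{\eps_{k}}\nabla_{\eps_{i}}$ at $x_{0}$, expands $\sum_{i}\nabla_{\eps_{i}}S_{ij}$ into three terms according to whether the derivative falls on $|\psi|^{p-2}$ or on one of the two factors $\iota\psi$.

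The key observation is that the first two of these reassemble into $-\langle\delta^{\nabla}(|\psi|^{p-2}\psi),\iota_{\eps_{j}}\psi\rangle$: applying the definition $\delta^{\nabla}=-\sum_{i}\iota_{\eps_{i}}\nabla_{\eps_{i}}$ to $|\psi|^{p-2}\psi$ and pairing with $\iota_{\eps_{j}}\psi$ reproduces exactly the term where $\nabla$ hits $|\psi|^{p-2}$ and the term where it hits the first factor. It then remains to match the third term $|\psi|^{p-2}\sum_{i}\langle\iota_{\eps_{i}}\psi,\iota_{\eps_{j}}\nabla_{\eps_{i}}\psi\rangle$, together with the contribution $-|\psi|^{p-2}\langle\nabla_{\eps_{j}}\psi,\psi\rangle$ inherited from the metric term, against the remaining target $-\langle|\psi|^{p-2}\iota_{\eps_{j}}\D^{\nabla}\psi,\psi\rangle$.

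This final matching is the crux of the computation. I would feed the interior-product Leibniz rule into $\D^{\nabla}\psi=\sum_{i}\omega^{i}\wedge\nabla_{\eps_{i}}\psi$ to obtain $\iota_{\eps_{j}}\D^{\nabla}\psi=\nabla_{\eps_{j}}\psi-\sum_{i}\omega^{i}\wedge\iota_{\eps_{j}}\nabla_{\eps_{i}}\psi$, so that the $\nabla_{\eps_{j}}\psi$ piece cancels the metric contribution and one is reduced to verifying $\sum_{i}\langle\iota_{\eps_{i}}\psi,\iota_{\eps_{j}}\nabla_{\eps_{i}}\psi\rangle=\sum_{i}\langle\omega^{i}\wedge\iota_{\eps_{j}}\nabla_{\eps_{i}}\psi,\psi\rangle$. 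This is precisely the mutual adjointness of $\omega^{i}\wedge(\cdot)$ and $\iota_{\eps_{i}}$ with respect to $\langle\cdot,\cdot\rangle$ in an orthonormal frame, which closes the argument. I expect the only real obstacle to be bookkeeping---keeping the Leibniz signs for $\iota_{\eps_{j}}$ acting on a wedge product straight and confirming that all frame dependence cancels---while the conceptual content is carried by the two adjointness relations (between $\D^{\nabla}$ and $\delta^{\nabla}$, and between $\omega^{i}\wedge(\cdot)$ and $\iota_{\eps_{i}}$).
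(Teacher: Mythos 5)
Your proof is correct, and although it opens exactly as the paper's does --- compute $\Div\ T_{\psi}^{g}$ in a frame normal at the point, absorb the terms where the derivative falls on $|\psi|^{p-2}$ or on the first factor $\iota_{\eps_{i}}\psi$ into $-\langle\delta^{\nabla}(|\psi|^{p-2}\psi),\iota_{\eps_{j}}\psi\rangle$, and reduce to matching $|\psi|^{p-2}\bigl(\sum_{i}\langle\iota_{\eps_{i}}\psi,\iota_{\eps_{j}}\nabla_{\eps_{i}}\psi\rangle-\langle\nabla_{\eps_{j}}\psi,\psi\rangle\bigr)$ against $-|\psi|^{p-2}\langle\iota_{\eps_{j}}\D^{\nabla}\psi,\psi\rangle$ --- you close that last gap by a genuinely different and considerably shorter route. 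The paper expands $(\D^{\nabla}\psi,\eps_{jij_{1}\dots j_{k-1}})$ as an alternating sum of covariant derivatives and then devotes the bulk of the proof to multi-index bookkeeping: symmetrising over permutations of $I$ and $J$, a change of variables, and a count showing that the sum over $q$ contributes $(k-1)$ copies of a single term, after which a factor of $k$ cancels against the $\tfrac{1}{k}$ arising from the trace part. You instead invoke the anti-derivation identity $\iota_{\eps_{j}}\D^{\nabla}\psi=\nabla_{\eps_{j}}\psi-\sum_{i}\omega^{i}\wedge\iota_{\eps_{j}}\nabla_{\eps_{i}}\psi$ together with the pointwise adjointness $\langle\omega^{i}\wedge\alpha,\beta\rangle=\langle\alpha,\iota_{\eps_{i}}\beta\rangle$ in an orthonormal frame, which yields the same identity in two lines with no combinatorics; the paper's permutation count is, in effect, a hands-on verification of precisely that adjointness. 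Your version localises the nontrivial content in two standard algebraic facts and is easier to audit, at the cost of relying on those facts for bundle-valued forms (both are immediate from the definition of the induced inner product, so this is a fair trade). A further point in your favour: you flag the differentiability of $|\psi|^{p-2}$ at zeros of $\psi$ for $1<p<2$ and restrict to $\{\psi\neq 0\}$, an issue the paper passes over in silence.
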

\begin{proof}
We compute in a local orthonormal frame adapted at $x\in M$, evaluating all of the following functions at $x$:
\begin{align*}
\mbox{div}\ T_{\psi}^{g}&=-\sum_{j}\left<\delta^{\nabla}(|\psi|^{p-2}\psi),\iota_{\eps_{j}}\psi \right>\omega^{j} \\
&\qquad +|\psi|^{p-2}\sum_{i,j}\sum_{J^{k-1}}\left(\left<(\psi,\eps_{i}\wedge\eps_{J}),(\nabla_{\eps_{i}}\psi,\eps_{j}\wedge\eps_{J}) \right> - \frac{1}{k}\left<(\nabla_{\eps_{j}}\psi,\eps_{i}\wedge\eps_{J}),(\psi,\eps_{i}\wedge\eps_{J})  \right>  \right)\omega^{j}.
\end{align*}
Now, we note that
\begin{multline*}
\left(\D^{\nabla}\psi,\eps_{jij_{1}\dots j_{k-1}}\right)\\
=(\nabla_{\eps_{j}}\psi,\eps_{ij_{1}\dots j_{k-1}})-(\nabla_{\eps_{i}}\psi,\eps_{jj_{1}\dots j_{k-1}}) +\sum_{q=1}^{k-1}(-1)^{q+1}(\nabla_{\eps_{j_{q}}}\psi,\eps_{jij_{1}\dots j_{q-1}\widehat{j}_{q}j_{q+1}\dots j_{k-1}}),
\end{multline*}
whence
\begin{align}
&\sum_{i}\sum_{J^{k-1}}\left<(\nabla_{\eps_{j}}\psi,\eps_{i}\wedge\eps_{J}),(\psi,\eps_{i}\wedge\eps_{J})  \right>\notag\\
&=\frac{1}{(k-1)!}\sum_{i,j_{1},...,j_{k-1}}\left<(\nabla_{\eps_{j}}\psi,\eps_{ij_{1}\dots j_{k-1}}),(\psi,\eps_{ij_{1}\dots j_{k-1}})\right>\notag\\
&=\sum_{i}\sum_{J^{k-1}}\left(\left<(d^{\nabla}\psi,\eps_{j}\wedge\eps_{i}\wedge\eps_{J}),(\psi,\eps_{i}\wedge\eps_{J})   \right> +\left<(\nabla_{\eps_{i}}\psi,\eps_{j}\wedge\eps_{J}),(\psi,\eps_{i}\wedge\eps_{J})\right>\right)\notag\\
& +\frac{1}{(k-1)!}\sum_{i,j_{1},\dots, j_{k-1}}\sum_{q=1}^{k-1}(-1)^{q}\left<(\nabla_{\eps_{j_{q}}}\psi,\eps_{jij_{1}\dots j_{q-1}\widehat{j}_{q}j_{q+1}\dots j_{k-1}}),(\psi,\eps_{ij_{1}\dots j_{k-1}})\right>.\label{simpemdiv}
\end{align}
Expanding the sum over $q$ out and keeping track of signs when permuting the basis vectors, we rewrite the last sum (omitting the combinatorial factor) as
\begin{align*}
&\sum_{i,j_{1},\dots,j_{k-1}}\left<(\nabla_{\eps_{j_{1}}}\psi,\eps_{jij_{2}\dots j_{k-1}}),(\psi,\eps_{j_{1}ij_{2}\dots j_{k-1}})\right>+\dots\\
&\ \ +\left<(\nabla_{\eps_{j_{q}}}\psi,\eps_{jj_{1}\dots j_{q-1}ij_{q+1}\dots j_{k-1}}),(\psi,\eps_{j_{q}j_{1}\dots j_{q-1}ij_{q+1}\dots j_{k-1}})  \right>+\dots\\
&\ \ +\left<(\nabla_{\eps_{j_{k-1}}}\psi,\eps_{jj_{1}\dots j_{k-2} i}),(\psi,\eps_{j_{k-1}j_{1}\dots j_{k-2}i})\right>\\
&=(k-1)\sum_{i,j_{1},\dots,j_{k-1}}\left<(\nabla_{\eps_{i}}\psi,\eps_{jj_{1}\dots j_{k-1}}),(\psi,\eps_{ij_{1}\dots j_{k-1}})\right>\\
&=(k-1)\cdot(k-1)!\sum_{i}\sum_{J}\left<(\nabla_{\eps_{i}}\psi,\eps_{j}\wedge\eps_{J}),(\psi,\eps_{i}\wedge\eps_{J}) \right>,
\end{align*}
where the indices were relabeled in the second last line. Thus (\ref{simpemdiv}) reduces to
\begin{align*}
&\sum_{i}\sum_{J^{k-1}}\left[\left<(d^{\nabla}\psi,\eps_{j}\wedge\eps_{i}\wedge\eps_{J}),(\psi,\eps_{i}\wedge\eps_{J})\right> + k\left<(\nabla_{\eps_{i}}\psi,\eps_{j}\wedge\eps_{J}),(\psi,\eps_{i}\wedge\eps_{J})\right>
   \right]\\
&=k\left\{\sum_{L^{k}}\left<(d^{\nabla}\psi,\eps_{j}\wedge\eps_{L}),(\psi,\eps_{L})\right> + \sum_{i}\sum_{J^{k-1}}\left<(\nabla_{\eps_{i}}\psi,\eps_{j}\wedge\eps_{J}),(\psi,\eps_{i}\wedge\eps_{J})\right>\right\}.
\end{align*}
The result follows, since the latter term cancels out the unwanted term in the expression for $\mbox{div}\ T_{\psi}^{g}$ above.
\end{proof}
We therefore see that the following conservation law for $p$-harmonic $k$-forms may be read off this formula.
\begin{cor}[Conservation Law]\label{conslaw}If $\psi$ is $p$-harmonic, then $\mbox{div}\ T_{\psi}^{g}=0$.\end{cor}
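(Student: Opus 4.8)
The plan is to read the result directly off the divergence identity of Proposition \ref{emdiv}. That proposition expresses $\mbox{div}\ T_{\psi}^{g}$ as a sum of two one-forms, one built from $\delta^{\nabla}(|\psi|^{p-2}\psi)$ and the other from $\D^{\nabla}\psi$ (written $d^{\nabla}\psi$ there). The definition of $p$-harmonicity is precisely that both of these quantities vanish: equation (\ref{closed}) asserts $\D^{\nabla}\psi=0$ and equation (\ref{coclosed}) asserts $\delta^{\nabla}(|\psi|^{p-2}\psi)=0$. So the strategy is simply to substitute these two conditions into the formula of Proposition \ref{emdiv} and observe that every coefficient of $\omega^j$ collapses to zero.

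Concretely, I would fix an arbitrary local frame $\{\eps_i\}\leftrightarrow\{\omega^i\}$ and examine each of the two summands appearing in Proposition \ref{emdiv}. The coclosedness condition $\delta^{\nabla}(|\psi|^{p-2}\psi)=0$ makes the term $\langle\delta^{\nabla}(|\psi|^{p-2}\psi),\iota_{\eps_j}\psi\rangle$ vanish for every $j$. For the remaining term, I would note that $\D^{\nabla}\psi=0$ forces $\iota_{\eps_j}\D^{\nabla}\psi=0$ for each $j$, since the interior product of the zero form is zero; hence $\langle|\psi|^{p-2}\iota_{\eps_j}d^{\nabla}\psi,\psi\rangle$ vanishes as well. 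Each coefficient of $\omega^j$ is therefore zero, and consequently $\mbox{div}\ T_{\psi}^{g}=0$.

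There is no real obstacle here: the entire content is already contained in Proposition \ref{emdiv}, and the corollary is merely the observation that the two summands in that formula are exactly the two obstructions to $p$-harmonicity. The only point warranting a word of care is notational, namely that the operator $d^{\nabla}$ appearing in Proposition \ref{emdiv} is the same exterior covariant derivative $\D^{\nabla}$ as in the closedness condition (\ref{closed}); once this is acknowledged the vanishing is immediate and manifestly independent of the chosen frame.
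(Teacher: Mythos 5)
Your proposal is correct and follows exactly the paper's route: the corollary is read off directly from Proposition \ref{emdiv} by substituting the two defining conditions (\ref{closed}) and (\ref{coclosed}), which annihilate the two summands term by term. Your remark that $d^{\nabla}$ in Proposition \ref{emdiv} denotes the same operator $\D^{\nabla}$ as in (\ref{closed}) is an accurate reading of the paper's notation.
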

In a sense, the energy-momentum tensor is thought to contain information about how $p$-harmonic vector bundle-valued $k$-forms scale. In \cite{alikakos2011some}, for example, the integral of the divergence of $T_{\psi}^{g}$ contracted with the radial vector field $x\mapsto \sum_{i}\frac{x^{i}}{|x|}\left.\partial_{i}\right|_{x}\in T_{x}\mathbb{R}^{n}$ yields an expression that co\"incides with what is usually obtained after scaling the integrand of the localised average Dirichlet energy associated to the equation and differentiating. In order to make use of this technique more generally, we compute the divergence of the energy-momentum tensor contracted with an arbitrary (local) vector field, henceforth to be interpreted as a `scaling direction.' The following proposition, a general product rule formula, shall be made use of in the sequel.
\begin{prop}\label{contem} If $U\subset M$ is open, $X\in\Gamma(TU)$, and $S\in \Gamma(T^{\ast}M\otimes T^{\ast}M)$ is symmetric, the identity
\begin{align*}
\Div\ \iota_{X}S&=\left<S,\nabla X^{\flat}\right> + \iota_{X}\Div\ S
\end{align*}
where $(\cdot)^\flat:TM\rightarrow T^{\ast}M$ is the `musical isomorphism' induced by $g$ and $\iota_{X}$ denotes the interior product associated to $X$ acting on $\Gamma(\bigotimes T^{\ast}U)$ by `fixing the first entry.'
\end{prop}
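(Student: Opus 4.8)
The plan is to reduce everything to the elementary Leibniz rule for the covariant derivative of an interior product and then take the metric trace, exploiting the symmetry of $S$ to match the two terms on the right-hand side. First I would fix a local $g$-orthonormal frame $\{\eps_{i}\}\leftrightarrow\{\omega^{i}\}$, recalling that for a one-form $\alpha$ the divergence is the metric trace of its covariant derivative, $\Div\ \alpha=\sum_{i}(\nabla_{\eps_{i}}\alpha)(\eps_{i})$, and that by the stated convention $(\iota_{X}S)(Y)=S(X,Y)$ while $\iota_{X}\Div\ S=(\Div\ S)(X)$.

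The key pointwise step is the tensorial identity
\begin{align*}
(\nabla_{Z}(\iota_{X}S))(Y)=(\nabla_{Z}S)(X,Y)+S(\nabla_{Z}X,Y),
\end{align*}
valid for all vector fields $X,Y,Z$ on $U$. This follows directly by writing out $(\nabla_{Z}(\iota_{X}S))(Y)=Z(S(X,Y))-S(X,\nabla_{Z}Y)$ and substituting the analogous expansion $Z(S(X,Y))=(\nabla_{Z}S)(X,Y)+S(\nabla_{Z}X,Y)+S(X,\nabla_{Z}Y)$; the term $S(X,\nabla_{Z}Y)$ cancels, and neither curvature nor metric compatibility is needed at this stage.

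Setting $Z=Y=\eps_{i}$ and summing over $i$ then splits $\Div\ \iota_{X}S$ into two pieces. For the first, $\sum_{i}(\nabla_{\eps_{i}}S)(X,\eps_{i})$, I would use that $S$—and hence each $\nabla_{\eps_{i}}S$—is symmetric in its two covariant slots to rewrite it as $\sum_{i}(\nabla_{\eps_{i}}S)(\eps_{i},X)=(\Div\ S)(X)=\iota_{X}\Div\ S$. For the second, $\sum_{i}S(\nabla_{\eps_{i}}X,\eps_{i})$, I would expand $\nabla_{\eps_{i}}X=\sum_{j}\langle\nabla_{\eps_{i}}X,\eps_{j}\rangle\eps_{j}$ and invoke the metric compatibility of the Levi-Civita connection to identify $(\nabla X^{\flat})(\eps_{i},\eps_{j})=(\nabla_{\eps_{i}}X^{\flat})(\eps_{j})=\langle\nabla_{\eps_{i}}X,\eps_{j}\rangle$; a further use of the symmetry of $S$ then gives $\sum_{i,j}S(\eps_{i},\eps_{j})(\nabla X^{\flat})(\eps_{i},\eps_{j})=\langle S,\nabla X^{\flat}\rangle$, which is exactly the claimed term.

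The computation carries no genuine obstacle—it is a careful bookkeeping of which tensor slot is being contracted—so the only point demanding real attention is the twofold use of the symmetry of $S$: once to interchange the arguments in the first piece and thereby produce $\iota_{X}\Div\ S$, and once in the second piece to recover the \emph{full} contraction $\langle S,\nabla X^{\flat}\rangle$ (rather than a contraction with only the symmetric part of $\nabla X^{\flat}$). Since the resulting expression is manifestly independent of the chosen frame, the identity holds globally on $U$.
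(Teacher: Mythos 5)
Your proof is correct and follows essentially the same route as the paper: expand $\nabla_{\eps_{i}}(\iota_{X}S)$ by the Leibniz rule into $\iota_{\nabla_{\eps_{i}}X}S+\iota_{X}\nabla_{\eps_{i}}S$, take the trace in an orthonormal frame, and invoke the symmetry of $S$ to identify the two resulting terms with $\left<S,\nabla X^{\flat}\right>$ and $\iota_{X}\Div\ S$. Your version is merely more explicit about precisely where the symmetry of $S$ enters, which the paper compresses into a single remark.
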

\begin{proof}
Write $(\cdot)^{\sharp}={(\cdot)^{\flat}}^{-1}$. We again compute in a local orthonormal frame adapted at $x$ so that, at $x$,
\begin{align*}
\mbox{div}\ \iota_{X}S&=\sum_{i=1}^{n}\left<\nabla_{\eps_{i}}\left(\iota_{X}S\right)^{\sharp},\eps_{i}\right>\\
&=\sum_{i=1}^{n}\left<\nabla_{\eps_{i}}\left(\iota_{X}S\right),\omega^{i}\right>\\
&=\sum_{i=1}^{n}\left<\iota_{\nabla_{\eps_{i}}X}S + \iota_{X}\nabla_{\eps_{i}}S,\omega^{i}\right>\\
&=\left<S,\nabla X\right> + \iota_{X}\mbox{div}\ S,
\end{align*}
where we used the symmetry of $S$ in the last step.
\end{proof}

\chapter{Monotonicity formul\ae}\label{monsec}
We now make use of the identities of the preceding section to derive monotonicity formul\ae\ for $p$-harmonic $k$-forms. To this end, fix $x_{0}\in M$, write $i_{0}$ for the injectivity radius at $x_{0}$ and $r=d(x_{0},\cdot)$ for the \textit{distance} function measured from $x_{0}$. Decomposing the metric $g$ as
\begin{align*}
g=g_{r}+\D r\otimes\D r
\end{align*}
in $B_{i_{0}}(x_{0})$, we note the local geometry estimate (in the sense of bilinear forms)
\begin{align}\label{locgeomest}
\underline{\Lambda}r^{2}g_{r}\leq g-\nabla^{2}\left(\frac{1}{2}r^{2}\right)\leq \overline{\Lambda} r^{2}g_{r}
\end{align}
on $B_{\frac{i_{0}}{2}}(x_{0})$, where $\nabla^{2}$ is the \textit{Hessian} operator and $\underline{\Lambda},\overline{\Lambda}\in\mathbb{R}^{+}$ are constants depending on the geometry of $M$ in $B_{i_{0}}(x_{0})$ (cf. e.g. \cite[Theorem 27]{petersen}). More specifically, if $M=\mathbb{H}^{n}_{-\kappa^{2}}$, i.e. the upper-half space $\mathbb{R}^{n}_{+}$ equipped with a metric of nonpositive constant sectional curvature $-\kappa^{2}$ ($\kappa\geq 0$), then $i_{0}=\infty$ and the equality
\begin{align*}
g-\nabla^{2}\left(\frac{1}{2}r^{2}\right)=\begin{cases}\left(1-\kappa r\coth(\kappa r)\right)g_{r},&\kappa>0\\0,&\kappa=0  \end{cases}
\end{align*}
holds on all of $M$.
\begin{thm}[Monotonicity formula]\label{monformtheorem} Let $\psi\in \Gamma(E\otimes\Lambda^{k}T^{\ast}M)$. The identity
	\begin{multline}
		\frac{\D}{\D R}\left(\frac{1}{R^{n-kp}}\int_{B_{R}(x_{0})}e_{g}(\psi)\dvol_{g} \right)\\
		= \frac{1}{R^{n-kp+1}}\int_{B_{R}(x_{0})}\left<T_{\psi}^{g},g-\nabla^{2}\left(\frac{1}{2}r^{2}\right)\right> - \iota_{r\nabla r}\Div\ T_{\psi}^{g}\dvol_{g}\\
		+\frac{1}{R^{n-kp}}\int_{\partial B_{R}(x_{0})}|\psi|^{p-2}|\iota_{\nabla r}\psi|^{2}\D S\label{monident}
	\end{multline}
	holds for all $R\in\left]0,i_{0}\right[$. In particular, if $\psi$ is $p$-harmonic, then there exists a constant $\Lambda\geq 0$ depending on the geometry of $M$ in $B_{i_{0}}(x_{0})$ such that
	\begin{align}
		\frac{\D}{\D R}\left(\frac{e^{\Lambda R^{2}}}{R^{n-kp}}\int_{B_{R}(x_{0})}e_{g}(\psi)\dvol_{g} \right)\geq 0\label{monformgen}
	\end{align}
	for $R< \frac{i_{0}}{2}$. In particular, if $M=\mathbb{H}^{n}_{-\kappa^{2}}$, then this inequality holds for all $R>0$ with $\Lambda=0$.
\end{thm}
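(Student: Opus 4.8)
The plan is to feed the \emph{scaling field} $X=r\nabla r=\nabla(\tfrac12 r^{2})$ into the product rule of Proposition~\ref{contem}. Since $X^{\flat}=r\,\D r$ has $\nabla X^{\flat}=\nabla^{2}(\tfrac12 r^{2})$, taking $S=T_{\psi}^{g}$ there yields
\begin{equation*}
\Div\,\iota_{r\nabla r}T_{\psi}^{g}=\left\langle T_{\psi}^{g},\nabla^{2}\!\left(\tfrac12 r^{2}\right)\right\rangle+\iota_{r\nabla r}\Div\,T_{\psi}^{g}.
\end{equation*}
I would integrate this over $B_{R}(x_{0})$ and apply the divergence theorem on the left, the outward unit normal being $\nabla r$; since $r=R$ on $\partial B_{R}(x_{0})$, the boundary contribution is $R\int_{\partial B_{R}(x_{0})}T_{\psi}^{g}(\nabla r,\nabla r)\,\D S$, and Proposition~\ref{emdef} gives directly $T_{\psi}^{g}(\nabla r,\nabla r)=|\psi|^{p-2}|\iota_{\nabla r}\psi|^{2}-e_{g}(\psi)$ because $|\nabla r|=1$.

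The next step is the algebraic identity $\tr_{g}T_{\psi}^{g}=-(n-kp)e_{g}(\psi)$, which follows from the pointwise relation $\sum_{i}|\iota_{\eps_{i}}\psi|^{2}=k|\psi|^{2}$ for a $k$-form. Writing $A:=g-\nabla^{2}(\tfrac12 r^{2})$, this lets me trade the Hessian pairing for the geometrically meaningful $A$ via $\langle T_{\psi}^{g},\nabla^{2}(\tfrac12 r^{2})\rangle=-(n-kp)e_{g}(\psi)-\langle T_{\psi}^{g},A\rangle$. Substituting this into the integrated identity, moving the energy terms to one side, and recognising the coarea identity $\frac{\D}{\D R}\int_{B_{R}(x_{0})}e_{g}(\psi)\,\dvol_{g}=\int_{\partial B_{R}(x_{0})}e_{g}(\psi)\,\D S$, the formula (\ref{monident}) falls out after dividing by $R^{n-kp+1}$; note that the $-e_{g}(\psi)$ piece of the boundary term is exactly what the coarea derivative supplies.

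For the monotonicity inequality I would invoke Corollary~\ref{conslaw} to annihilate the $\iota_{r\nabla r}\Div T_{\psi}^{g}$ term and discard the manifestly nonnegative boundary integral, leaving the sign of $\int_{B_{R}(x_{0})}\langle T_{\psi}^{g},A\rangle\,\dvol_{g}$ to control. Here I would split $T_{\psi}^{g}=|\psi|^{p-2}S-e_{g}(\psi)g$ with $S=\sum_{i,j}\langle\iota_{\eps_{i}}\psi,\iota_{\eps_{j}}\psi\rangle\,\omega^{i}\otimes\omega^{j}$ positive semidefinite, and use the local geometry estimate (\ref{locgeomest}): since $A-\underline{\Lambda}r^{2}g_{r}\geq 0$ and $S\geq 0$, the elementary inequality $\tr(PQ)\geq 0$ for symmetric $P,Q\geq 0$ gives $\langle S,A\rangle\geq\underline{\Lambda}r^{2}\langle S,g_{r}\rangle\geq 0$, while $A\leq\overline{\Lambda}r^{2}g_{r}$ forces $\tr_{g}A\leq(n-1)\overline{\Lambda}r^{2}$. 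Together these produce the pointwise bound $\langle T_{\psi}^{g},A\rangle\geq-\tfrac{(n-1)\overline{\Lambda}}{p}r^{2}|\psi|^{p}$, whence $\int_{B_{R}(x_{0})}\langle T_{\psi}^{g},A\rangle\geq-(n-1)\overline{\Lambda}R^{2}\int_{B_{R}(x_{0})}e_{g}(\psi)$ after estimating $r\leq R$. Setting $\Lambda=\tfrac12(n-1)\overline{\Lambda}$ and multiplying through by the integrating factor $e^{\Lambda R^{2}}$ then yields (\ref{monformgen}).

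For $M=\mathbb{H}^{n}_{-\kappa^{2}}$ the displayed equality turns $A$ into the explicit \emph{nonpositive} tensor $(1-\kappa r\coth(\kappa r))g_{r}$, so I would instead compute $\langle T_{\psi}^{g},A\rangle=-c(r)\langle T_{\psi}^{g},g_{r}\rangle$ directly, where $c(r)=\kappa r\coth(\kappa r)-1\geq 0$; the trace identity reduces the factor to $\langle T_{\psi}^{g},g_{r}\rangle=\tfrac{kp-n+1}{p}|\psi|^{p}-|\psi|^{p-2}|\iota_{\nabla r}\psi|^{2}$, whose nonpositivity (and hence $\langle T_{\psi}^{g},A\rangle\geq 0$, giving $\Lambda=0$) is controlled by the dimension condition together with the nonnegative radial term. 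I expect the genuine obstacle to be precisely this sign control of $\langle T_{\psi}^{g},A\rangle$: the energy--momentum tensor is indefinite, so its pairing with the geometry tensor only becomes tractable after isolating the positive-semidefinite part $S$ and marrying $\tr(PQ)\geq 0$ to the two-sided bound (\ref{locgeomest}), the most delicate point being the interaction of the curvature bound with the indefinite $-e_{g}(\psi)g$ term in the negatively curved case.
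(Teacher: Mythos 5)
Your argument is correct and follows the paper's own proof essentially line for line: the same contraction of $T_{\psi}^{g}$ with the scaling field $r\nabla r$ via Proposition~\ref{contem}, the same trace identity $\tr_{g}T_{\psi}^{g}=(kp-n)e_{g}(\psi)$, Gau\ss' theorem, the coarea formula, and the same use of (\ref{locgeomest}) together with the explicit hyperbolic Hessian for the final claim. The only difference is cosmetic: your choice $\Lambda=\tfrac12(n-1)\overline{\Lambda}$ silently uses $\underline{\Lambda}\geq 0$ (as literally stated in (\ref{locgeomest})) in the step $\underline{\Lambda}r^{2}\langle S,g_{r}\rangle\geq 0$, whereas the paper hedges with $\underline{\Lambda}^{-}=\min\{\underline{\Lambda},0\}$ so as to also absorb a negative lower bound, picking up the extra nonnegative term $-\underline{\Lambda}^{-}|\psi|^{p-2}|\iota_{\nabla r}\psi|^{2}$ and the $kp\underline{\Lambda}^{-}$ contribution to $\Lambda$ in the process; the two coincide under the stated hypothesis.
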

\begin{proof} Taking $X=\nabla\left(\frac{1}{2}r^{2}\right)=r\nabla r$ and $Y=\iota_{X}T_{\psi}^{g}$, it is clear from Proposition \ref{contem} that
	\begin{align}
		\Div\ Y&=\tr\ T_{\psi}^{g} - \left<T_{\psi}^{g},g-\nabla^{2}\left(\frac{1}{2}r^{2}\right)\right> + \iota_{r\nabla r}\Div\ T_{\psi}^{g}\\
		&=(kp-n)e_{g}(\psi)- \left<T_{\psi}^{g},g-\nabla^{2}\left(\frac{1}{2}r^{2}\right)\right> + \iota_{r\nabla r}\Div\ T_{\psi}^{g}
	\end{align}
	whence, by Gau\ss' theorem,
	\begin{align*}
		\int_{\partial B_{R}(x_{0})}\left<Y,\nabla r\right>\D S&=R\int_{\partial B_{R}(x_{0})}|\psi|^{p-2}|\iota_{\nabla r}\psi|^{2}-e_{g}(\psi)\D S\\
		&=(kp-n)\int_{B_{R}(x_{0})}e_{g}(\psi)\dvol_{g}\\
		&\qquad - \int_{B_{R}(x_{0})}\left<T_{\psi}^{g},g-\nabla^{2}\left(\frac{1}{2}r^{2}\right)\right>-\iota_{r\nabla r}\Div\ T_{\psi}^{g}\dvol_{g}
	\end{align*}
	which may be rearranged as
	\begin{multline}
		(kp-n)\int_{B_{R}(x_{0})}e_{g}(\psi)\dvol_{g} + R\int_{B_{R}(x_{0})}e_{g}(\psi)\D S\\
		=R\int_{\partial B_{R}(x_{0})}|\psi|^{p-2}|\iota_{\nabla r}\psi|^{2}\D S + \int_{B_{R}(x_{0})}\left<T_{\psi}^{g},g-\nabla^{2}\left(\frac{1}{2}r^{2}\right)\right>-\iota_{r\nabla r}\Div\ T_{\psi}^{g}\dvol_{g}.\label{almostmon}
	\end{multline}
	By the coarea formula,
	\begin{align*}
		R^{n-kp+1}\frac{\D}{\D R}\left(\frac{1}{R^{n-kp}}\int_{B_{R}(x_{0})}e_{g}(\psi)\dvol_{g}\right)=(kp-n)\int_{B_{R}(x_{0})}e_{g}(\psi)\dvol_{g} + R\int_{B_{R}(x_{0})}e_{g}(\psi)\D S
	\end{align*}
	which, together with (\ref{almostmon}), implies (\ref{monident}).

	Now suppose that $\psi$ is $p$-harmonic so that $\Div\ T_{\psi}^{g}=0$ by Corollary \ref{conslaw}. Assuming the local geometry estimate (\ref{locgeomest}), it is clear that
	\begin{align*}
		\left<T_{\psi}^{g},g-\nabla^{2}\left(\frac{1}{2}r^{2}\right)\right>\geq \left(kp\underline{\Lambda}^{-}- (n-1)\overline{\Lambda}\right)_{-}R^{2}e_{g}(\psi) - \underline{\Lambda}^{-}|\psi|^{p-2}|\iota_{\nabla r}\psi|^{2},
	\end{align*}
	on $B_{R}(x_{0})$, where for $\alpha\in\mathbb{R}$ we write $\alpha^{-}=\min\{\alpha,0\}$. Thus, setting $\Lambda=-\frac{1}{2}\left(kp\underline{\Lambda}^{-}- (n-1)\overline{\Lambda}\right)_{-}$, (\ref{monident}) implies that
	\begin{multline*}
		\frac{\D}{\D R}\left(\frac{1}{R^{n-kp}}\int_{B_{R}(x_{0})}e_{g}(\psi)\dvol_{g}\right)+\frac{2\Lambda R}{R^{n-kp}}\int_{B_{R}(x_{0})}e_{g}(\psi)\dvol_{g}\\
		\geq\frac{1}{R^{n-kp}}\int_{\partial B_{R}(x_{0})}|\psi|^{p-2}|\iota_{\nabla r}\psi|^{2}\D S - \frac{\underline{\Lambda}^{-}}{R^{n-kp+1}}\int_{B_{R}(x_{0})}|\psi|^{p-2}|\iota_{\nabla r}\psi|^{2}\dvol_{g}\geq 0.
	\end{multline*}
	Multiplying through by the integrating factor $e^{\Lambda R^{2}}$ then implies (\ref{monformgen}).

	Finally, for the case where $M=\mathbb{H}^{n}_{-\kappa^{2}}$, we note that the case $\kappa=0$ follows from the preceding computations with $\underline{\Lambda}=\overline{\Lambda}=0$, whereas the case $\kappa>0$ follows from an explicit computation, namely by noting that
	\begin{align*}
		\left<T_{\psi}^{g},g-\nabla^{2}\left(\frac{1}{2}r^{2}\right)\right>=(1-\kappa r\coth(\kappa r))\left((kp-(n-1))e_{g}(\psi) - |\psi|^{p-2}|\iota_{\nabla r}\psi|^{2}\right),
	\end{align*}
	but $1-\kappa r\coth(\kappa r)\leq 0$ for all $r>0$ and $kp-(n-1)\leq 0$ so that this expression is nonnegative, whence the final claim follows from (\ref{monident}).
\end{proof}
Besides implying a monotonicity formula for $p$-harmonic bundle-valued $k$-forms, Propositions \ref{emdiv} and \ref{contem} also yields a monotonicity formula for bundle-valued $k$-forms with suitably controlled `inhomogeneities' as indicated in the following theorem which should be compared to \cite[\S4.3]{simongmt} and \cite[Theorem 3.2]{uhlenbeckpriori}.
\begin{thm} If $\psi\in\Gamma(E\otimes\Lambda^{k}T^{\ast}M)$ is such that
	\begin{align*}
 |\delta^{\nabla}(|\psi|^{p-2}\psi)| + |\psi|^{p-2}|\D^{\nabla}\psi|\leq \Gamma
	\end{align*}
	on $B_{i_{0}}(x_{0})$, then
	\begin{align*}
		\frac{\D}{\D R}\left(\frac{e^{\Lambda R^{2} + R}}{R^{n-kp}}\int_{B_{R}(x_{0})}e_{g}(\psi)\dvol_{g}+
	      \frac{{\Gamma}^{p'}}{p'}\int_{0}^{R}\frac{e^{\Lambda s^{2} + s}}{s^{n-kp}}\Vol(B_{s}(x_{0}))\D s\right)\geq 0
	\end{align*}
	for $R<\frac{i_{0}}{2}$, where $\Lambda\in\mathbb{R}$ is as in Theorem \ref{monformtheorem}, $\Vol(B_{s}(x_{0}))=\int_{B_{s}(x_{0})}\dvol_{g}$ and $p'>1$ is such that $\frac{1}{p}+\frac{1}{p'}=1$.
\end{thm}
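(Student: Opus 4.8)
The plan is to rerun the proof of Theorem~\ref{monformtheorem}, the only difference being that the conservation law $\Div\ T_{\psi}^{g}=0$ is no longer available and the resulting inhomogeneity must be absorbed into a correction term. I would begin from the monotonicity identity~(\ref{monident}), which Theorem~\ref{monformtheorem} establishes for an \emph{arbitrary} section $\psi$ and which therefore remains at my disposal. The curvature term $\left<T_{\psi}^{g},g-\nabla^{2}\left(\frac{1}{2}r^{2}\right)\right>$ is estimated verbatim as in that proof via the local geometry estimate~(\ref{locgeomest}), contributing the factor $e^{\Lambda R^{2}}$ with the same constant $\Lambda$ together with a favourable (nonnegative) multiple of $|\psi|^{p-2}|\iota_{\nabla r}\psi|^{2}$; the genuinely new work lies entirely in bounding the divergence term $-\iota_{r\nabla r}\Div\ T_{\psi}^{g}$.

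For that term I would invoke Proposition~\ref{emdiv}. Contracting the one-form $\Div\ T_{\psi}^{g}$ against $r\nabla r$ in a local orthonormal frame and using $\sum_{j}\left<\nabla r,\eps_{j}\right>\iota_{\eps_{j}}=\iota_{\nabla r}$ gives
\begin{align*}
-\iota_{r\nabla r}\Div\ T_{\psi}^{g}=r\left(\left<\delta^{\nabla}(|\psi|^{p-2}\psi),\iota_{\nabla r}\psi\right>+\left<|\psi|^{p-2}\iota_{\nabla r}\D^{\nabla}\psi,\psi\right>\right).
\end{align*}
Cauchy--Schwarz together with the pointwise bounds $|\iota_{\nabla r}\psi|\leq|\psi|$ and $|\iota_{\nabla r}\D^{\nabla}\psi|\leq|\D^{\nabla}\psi|$ (valid since $|\nabla r|=1$) bounds this below by $-r\left(|\delta^{\nabla}(|\psi|^{p-2}\psi)|+|\psi|^{p-2}|\D^{\nabla}\psi|\right)|\psi|\geq -r\Gamma|\psi|$ by hypothesis. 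The decisive step is then Young's inequality in the conjugate exponents $p,p'$, which yields
\begin{align*}
r\Gamma|\psi|\leq r\frac{\Gamma^{p'}}{p'}+r\frac{|\psi|^{p}}{p}=r\frac{\Gamma^{p'}}{p'}+r\,e_{g}(\psi),
\end{align*}
so that $-\iota_{r\nabla r}\Div\ T_{\psi}^{g}\geq -r\frac{\Gamma^{p'}}{p'}-r\,e_{g}(\psi)$. This is precisely where the conjugate exponent $p'$ and the volume correction term make their appearance.

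Substituting both estimates into~(\ref{monident}), discarding the nonnegative boundary integral and the $-\underline{\Lambda}^{-}$ term, and using $r\leq R$ on $B_{R}(x_{0})$, I expect to arrive at the differential inequality
\begin{align*}
\frac{\D}{\D R}\left(\frac{1}{R^{n-kp}}\int_{B_{R}(x_{0})}e_{g}(\psi)\dvol_{g}\right)+\frac{2\Lambda R+1}{R^{n-kp}}\int_{B_{R}(x_{0})}e_{g}(\psi)\dvol_{g}\geq -\frac{\Gamma^{p'}}{p'}\frac{\Vol(B_{R}(x_{0}))}{R^{n-kp}}.
\end{align*}
Since $\frac{\D}{\D R}(\Lambda R^{2}+R)=2\Lambda R+1$, multiplying through by the integrating factor $e^{\Lambda R^{2}+R}$ converts the left-hand side into $\frac{\D}{\D R}\left(\frac{e^{\Lambda R^{2}+R}}{R^{n-kp}}\int_{B_{R}(x_{0})}e_{g}(\psi)\dvol_{g}\right)$, while the fundamental theorem of calculus identifies the right-hand side with $-\frac{\D}{\D R}\left(\frac{\Gamma^{p'}}{p'}\int_{0}^{R}\frac{e^{\Lambda s^{2}+s}}{s^{n-kp}}\Vol(B_{s}(x_{0}))\,\D s\right)$; transposing the latter term across the inequality gives exactly the asserted conclusion.

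The main obstacle I anticipate is not analytic but bookkeeping: one must verify that the factor of $r$ produced by contracting with $r\nabla r$, combined with the bound $r\leq R$ and the signs of $\Lambda\geq 0$ and $-\underline{\Lambda}^{-}\geq 0$, conspires to produce \emph{exactly} the coefficient $2\Lambda R+1$, so that it matches the exponent $\Lambda R^{2}+R$ of the integrating factor; one should also note that the correction integral converges at $s=0$, which it does since $\Vol(B_{s}(x_{0}))s^{-(n-kp)}=O(s^{kp})$. Beyond that, the argument is a faithful transcription of the proof of Theorem~\ref{monformtheorem}, with Young's inequality playing the role formerly played by the conservation law.
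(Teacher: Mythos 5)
Your proposal is correct and follows essentially the same route as the paper: start from the identity (\ref{monident}) for arbitrary $\psi$, estimate the curvature term via (\ref{locgeomest}) exactly as in Theorem \ref{monformtheorem}, bound $-\iota_{r\nabla r}\Div\ T_{\psi}^{g}$ below by $-R|\psi|q_{\psi}$ via Proposition \ref{emdiv} and Cauchy--Schwarz, apply Young's inequality with exponents $p,p'$, and integrate with the factor $e^{\Lambda R^{2}+R}$. Your bookkeeping of the coefficient $2\Lambda R+1$ is in fact slightly more careful than the paper's displayed intermediate inequality, which writes only $2\Lambda R$ while nonetheless using the integrating factor $e^{\Lambda R^{2}+R}$.
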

\begin{proof}
Set $q_{\psi}=|\delta^{\nabla}(|\psi|^{p-2}\psi)| + |\psi|^{p-2}|\D^{\nabla}\psi|$. Using the Cauchy-Schwarz inequality, it is clear that
\begin{align*}
	-\iota_{r\nabla r}\Div\ T_{\psi}^{g}&=\left<\delta^{\nabla}(|\psi|^{p-2}\psi),\iota_{r\partial_{r}}\psi\right> + |\psi|^{p-2}\left<\iota_{r\partial_{r}}\D^{\nabla}\psi,\psi\right>\\
&\geq -R\left\{|\delta^{\nabla}(|\psi|^{p-2}\psi)|\cdot|\iota_{\partial_{r}}\psi| + |\psi|^{p-2}|\iota_{\partial_{r}}\D^{\nabla}\psi|\cdot|\psi|\right\}\\
&\geq -R\ |\psi|q_{\psi},
\end{align*}
whenever $r<R$, whence an application of Young's inequality yields
\begin{align*}
	-\iota_{r\nabla r}\Div\ T_{\psi}^{g}&\geq -R\left(\frac{|\psi|^{p}}{p}+\frac{q_{\psi}^{p'}}{p'}\right)\\
&\geq -R\left(e_{g}(\psi) + \frac{\Gamma^{p'}}{p'}\right).
\end{align*}
Therefore, this together with the identity (\ref{monident}) and geometry bounds (\ref{locgeomest}) implies that
\begin{align*}
	\frac{\D}{\D R}\left(\frac{1}{R^{n-kp}}\int_{B_{R}(x_{0})}e_{g}(\psi)\dvol_{g} \right) + \frac{2\Lambda R}{R^{n-kp}}\int_{B_{R}(x_{0})}e_{g}(\psi)\dvol_{g} + \frac{\Gamma^{p'}}{p'}\cdot\frac{\Vol(B_{R}(x_{0}))}{R^{n-kp}}\geq 0.
\end{align*}
Thus, noting that $\Vol(B_{R}(x_{0}))=O(R^{n})$ as $R\searrow 0$ and multiplying through by the integrating factor $e^{\Lambda R^{2} + R}$ then establish the claim.
\end{proof}
This theorem applies e.g. if one replaces the $p$-coclosed condition on $\psi$ in Examples \ref{hmex} and \ref{ymex} with an equation of the form $\delta^{\nabla}(|\psi|^{p-2}\psi)=Q$, where $Q$ is some bounded form (cf. \cite{uhlenbeckpriori}).
\chapter{The Yang-Mills-Higgs system}\label{ymhsec}
We now turn our attention to a system not cast in the form (\ref{closed})-(\ref{coclosed}), but which is in some sense a coupling of Example \ref{ymex} and the semilinear elliptic system (\ref{semilineqn})--- the Yang-Mills-Higgs system (cf. e.g. \cite{MR614447})--- to which the techniques developed here readily lend themselves. To this end, assume the setup of Example \ref{ymex} and let $\rho:G\rightarrow\textup{GL}(V)$ be a representation of $G$ on the $\mathbb{R}$-vector space $V$ which is assumed to be equipped with a $\rho$-invariant inner product $\left<\cdot,\cdot\right>$. Together with the data of $P$, $(V,\left<\cdot,\cdot\right>)$ and $\rho$ give rise to a vector bundle $E_{0}$ admitting a connection $\nabla^{0}$ (induced by a connection $\omega$ on $P$) compatible with $\left<\cdot,\cdot\right>$. A pair $(\omega,u)$ consisting of a connection $\omega$ on $P$ and a section $u\in\Gamma(E_{0})$ is then said to be a \textit{Yang-Mills-Higgs pair} with (symmetric) potential $W\in C^{\infty}(\mathbb{R},\left[0,\infty\right[)$ whenever the equations
\begin{align}
	\delta^{\nabla}\underline{\Omega}^{\omega} + u\odot \D^{\nabla^{0}}u &=0\nonumber\\
	\delta^{\nabla^{0}}\D^{\nabla^{0}}u + 2(W'\circ |u|^{2})u&=0\label{ymhe}\tag{YMHE}
\end{align}
are satisfied, where $\odot:E_{0}\times E_{0}\otimes \Lambda T^{\ast}M\rightarrow E\otimes \Lambda T^{\ast}M$ is a fibrewise bilinear map defined such that
\begin{align*}
	\left<X,e_{1}\odot e_{2}\right>=\left<X\cdot e_{1},e_{2}\right>,
\end{align*}
for all $X\in \Gamma(E)$ and $e_{1},e_{2}\in \Gamma(E_{0})$, where $\cdot$ is the natural action of $E$ on $E_{0}$ induced by the derivative of $\rho$ at the identity of $G$, and extended to the rest of $E_{0}\times E_{0}\otimes \Lambda T^{\ast}M$ such that for all $\eta\in \Gamma(\Lambda T^{\ast}M)$ and $e_{1},e_{2}\in\Gamma(E_{0})$, $e_{1}\odot(e_{2}\otimes\eta)=(e_{1}\odot e_{2})\otimes \eta$. These equations arise from the variational problem
\begin{align*}
	\int_{M}\frac{1}{2}|\underline{\Omega}^{\xi}|^{2} + \frac{1}{2}|\D^{\nabla^{0}}v|^{2} + W\circ |v|^{2} \dvol_{g}\rightarrow\textup{min!}
\end{align*}
considered over the class of all connections $\xi$ on $P$ and sections $v\in\Gamma(E_{0})$ on which this integral is finite.

Now, suppose $\omega$ is an arbitrary connection on $P$ and $u$ an arbitrary section of $E_{0}$. In this case, the energy density to consider is
\begin{align*}
	\widetilde{e}_{g}(\omega,u)=\frac{1}{2}|\underline{\Omega}^{\omega}|^{2} + \frac{1}{2}|\D^{\nabla^{0}}u|^{2} + W\circ |u|^{2}.
\end{align*}
Computing exactly as in Propositions \ref{emdef} and \ref{emdiv}, we are led to the energy-momentum tensor
\begin{align*}
	\widetilde{T}_{(\omega,u)}^{g}=\sum_{i,j=1}^{n}\left(\left<\iota_{\eps_{i}}\underline{\Omega}^{\omega},\iota_{\eps_{j}}\underline{\Omega}^{\omega}\right> + \left<\nabla_{i}^{0}u,\nabla_{j}^{0}u\right>\right)\omega^{i}\otimes \omega^{j} - \widetilde{e}_{g}(\omega,u)g,
\end{align*}
expressed in any local frame $\{\eps_{i}\}\leftrightarrow\{\omega^{i}\}$, and the expression
	\begin{align*}
		\Div\ \widetilde{T}_{(\omega,u)}^{g}=-\sum_{j=1}^{n}\left(\left< \delta^{\nabla}\underline{\Omega}^{\omega} + u\odot \D^{\nabla^{0}}u,\iota_{\eps_{i}}\underline{\Omega}^{\omega} \right> + \left< \delta^{\nabla^{0}}\D^{\nabla^{0}}u + 2(W'\circ |u|^{2})u,\nabla^{0}_{i}u \right> \right)\omega^{j}
	\end{align*}
	for its divergence, thus implying a conservation law in this case if $(\omega,u)$ is a Yang-Mills-Higgs pair. Proceeding exactly as in Theorem \ref{monformtheorem}, noting in particular that
       \begin{align*}
	       \tr\ \widetilde{T}_{(\omega,u)}^{g}=(4-n)\widetilde{e}_{g}(\omega,u)-\left(|\D^{\nabla}u|^{2} + 4W\circ |u|^{2}\right),
       \end{align*}
       then yields the following monotonicity principle.
\begin{thm}\label{monformymhthm} Suppose $\omega$ is a connection on $P$ and $u\in\Gamma(E_{0})$. The identity
	\begin{multline*}
		\frac{\D}{\D R}\left(\frac{1}{R^{n-4}}\int_{B_{R}(x_{0})}\widetilde{e}_{g}(\omega,u)\dvol_{g} \right)\\
		=\frac{1}{R^{n-4}}\int_{\partial B_{R}(x_{0})}|\psi|^{p-2}|\iota_{\nabla r}\psi|^{2}\D S + \frac{1}{R^{n-3}}\int_{B_{R}(x_{0})}|\D^{\nabla^{0}}u|^{2} + 4W\circ|u|^{2} \dvol_{g}\\
		+ \frac{1}{R^{n-3}}\int_{B_{R}(x_{0})}\left<T_{\psi}^{g},g-\nabla^{2}\left(\frac{1}{2}r^{2}\right)\right> - \iota_{r\nabla r}\Div\ T_{\psi}^{g}\dvol_{g}
	\end{multline*}
	holds for all $R\in\left]0,i_{0}\right[$. In particular, if $(\omega,u)$ is a Yang-Mills-Higgs pair, then there exists a constant $\Lambda\geq 0$ depending on the geometry of $M$ in $B_{i_{0}}(x_{0})$ such that
	\begin{align}
		\frac{\D}{\D R}\left(\frac{e^{\Lambda R^{2}}}{R^{n-4}}\int_{B_{R}(x_{0})}\widetilde{e}_{g}(\omega,u)\dvol_{g} \right)\geq 0\label{monformgenymh}
	\end{align}
	for $R< \frac{i_{0}}{2}$. In particular, if $M=\mathbb{H}^{n}_{-\kappa^{2}}$, then this inequality holds for all $R>0$ with $\Lambda=0$.
\end{thm}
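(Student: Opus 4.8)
The plan is to reproduce the proof of Theorem \ref{monformtheorem} almost verbatim, the only genuinely new inputs being the bookkeeping forced by the two positive-semidefinite blocks of $\widetilde{T}_{(\omega,u)}^{g}$ and the sign of the potential $W$. First I would set $X=\nabla(\tfrac{1}{2}r^{2})=r\nabla r$ and $Y=\iota_{X}\widetilde{T}_{(\omega,u)}^{g}$ and combine Proposition \ref{contem} with the stated trace identity to obtain
\begin{align*}
\Div\ Y=(4-n)\widetilde{e}_{g}(\omega,u)-\bigl(|\D^{\nabla^{0}}u|^{2}+4W\circ|u|^{2}\bigr)-\left<\widetilde{T}_{(\omega,u)}^{g},g-\nabla^{2}\bigl(\tfrac{1}{2}r^{2}\bigr)\right>+\iota_{r\nabla r}\Div\ \widetilde{T}_{(\omega,u)}^{g}.
\end{align*}
Integrating over $B_{R}(x_{0})$, applying Gau\ss' theorem while noting that $\left<Y,\nabla r\right>=r\,\widetilde{T}_{(\omega,u)}^{g}(\nabla r,\nabla r)=r\bigl(|\iota_{\nabla r}\underline{\Omega}^{\omega}|^{2}+|\nabla^{0}_{\nabla r}u|^{2}-\widetilde{e}_{g}(\omega,u)\bigr)$, and then invoking the coarea formula, I would rearrange exactly as in (\ref{almostmon}) to reach the asserted identity; here the extra bulk integrand $|\D^{\nabla^{0}}u|^{2}+4W\circ|u|^{2}$ is precisely the surplus supplied by the trace, and the boundary integrand is $|\iota_{\nabla r}\underline{\Omega}^{\omega}|^{2}+|\nabla^{0}_{\nabla r}u|^{2}$.

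For the monotonicity conclusion I would first invoke the conservation law for Yang-Mills-Higgs pairs, killing the $\iota_{r\nabla r}\Div\ \widetilde{T}_{(\omega,u)}^{g}$ term. The boundary integrand is a sum of squares, and since $W\geq 0$ the integrand $|\D^{\nabla^{0}}u|^{2}+4W\circ|u|^{2}$ is also nonnegative, so both contribute with the correct sign. The one estimate needing care is the bracket $\left<\widetilde{T}_{(\omega,u)}^{g},g-\nabla^{2}(\tfrac{1}{2}r^{2})\right>$: writing $\widetilde{T}_{(\omega,u)}^{g}=A+B-\widetilde{e}_{g}(\omega,u)g$ with $A,B$ the two positive-semidefinite blocks and $S=g-\nabla^{2}(\tfrac{1}{2}r^{2})$, I would feed the local geometry bounds (\ref{locgeomest}) into $\left<A+B,S\right>$ and $-\widetilde{e}_{g}(\omega,u)\tr_{g}S$ separately. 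The decisive point is that $\tr_{g}(A+B)=2|\underline{\Omega}^{\omega}|^{2}+|\D^{\nabla^{0}}u|^{2}\leq 4\widetilde{e}_{g}(\omega,u)$ — the gap being exactly the surplus term, so $W\geq 0$ enters again — which collapses the coefficient of the bad $R^{2}\widetilde{e}_{g}(\omega,u)$ term into precisely the $kp=4$ instance of the estimate in Theorem \ref{monformtheorem}, with radial remainder $-\underline{\Lambda}^{-}\bigl(|\iota_{\nabla r}\underline{\Omega}^{\omega}|^{2}+|\nabla^{0}_{\nabla r}u|^{2}\bigr)$ dominated by the boundary term. Taking $\Lambda=-\tfrac{1}{2}\bigl(4\underline{\Lambda}^{-}-(n-1)\overline{\Lambda}\bigr)_{-}$ and multiplying by the integrating factor $e^{\Lambda R^{2}}$ then yields (\ref{monformgenymh}).

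The space-form case is an explicit verification rather than an estimate: substituting the closed form $S=(1-\kappa r\coth(\kappa r))g_{r}$ on $\mathbb{H}^{n}_{-\kappa^{2}}$ and computing directly gives $\left<\widetilde{T}_{(\omega,u)}^{g},S\right>=(1-\kappa r\coth(\kappa r))\bigl((5-n)\widetilde{e}_{g}(\omega,u)-|\D^{\nabla^{0}}u|^{2}-4W\circ|u|^{2}-|\iota_{\nabla r}\underline{\Omega}^{\omega}|^{2}-|\nabla^{0}_{\nabla r}u|^{2}\bigr)$, whose second factor is manifestly nonpositive since $n>4$ and $W\geq 0$; as $1-\kappa r\coth(\kappa r)\leq 0$, the bracket is nonnegative, so $\Lambda=0$ works for all $R>0$, and the flat case is the limit $\kappa\to 0$. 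I expect the sign bookkeeping in the bracket estimate — in particular extracting the $kp=4$ coefficient cleanly from the two semidefinite blocks while absorbing the Higgs contribution through $W\geq 0$ — to be the only real obstacle, everything else being a transcription of the arguments already carried out for Theorem \ref{monformtheorem}.
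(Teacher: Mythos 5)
Your proposal is correct and follows the paper's own route exactly: the paper proves this theorem by declaring that one ``proceeds exactly as in Theorem \ref{monformtheorem}'' using the stated trace identity, and your write-up supplies precisely those details, including the collapse of the bracket estimate to the $kp=4$ case via $\tr(A+B)\leq 4\widetilde{e}_{g}(\omega,u)$ and the sign analysis on $\mathbb{H}^{n}_{-\kappa^{2}}$. You also correctly identify the boundary and bulk integrands as $|\iota_{\nabla r}\underline{\Omega}^{\omega}|^{2}+|\nabla^{0}_{\nabla r}u|^{2}$ and $\langle\widetilde{T}_{(\omega,u)}^{g},\cdot\rangle$, which the theorem's statement leaves written in the leftover $\psi$-notation of the previous chapter.
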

\chapter{Application: Liouville-type theorems}\label{liouvillesec}
The following are immediate applications of Theorems \ref{monformtheorem} and \ref{monformymhthm}.
\begin{thm} Suppose $M=\mathbb{H}^{n}_{-\kappa^{2}}$ and $\psi\in\Gamma(E\otimes\Lambda^{k}T^{\ast}M)$ is $p$-harmonic. If
	\begin{equation*}
		\int_{B_{R}(x_{0})}e_{g}(\psi)\dvol_{g}=o(R^{n-kp})
	\end{equation*}
as $R\rightarrow\infty$, then $\psi\equiv 0$.
\end{thm}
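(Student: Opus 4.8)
The plan is to exploit the monotonicity formula of Theorem \ref{monformtheorem} together with the fact that on $M=\mathbb{H}^{n}_{-\kappa^{2}}$ the monotone quantity is precisely $R^{-(n-kp)}\int_{B_{R}(x_{0})}e_{g}(\psi)\dvol_{g}$ with $\Lambda=0$ and monotonicity holding for \emph{all} $R>0$. The key observation is that a nonnegative, nondecreasing function that has a subsequence tending to $0$ must vanish identically. I would argue as follows.

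First I would set
\begin{align*}
	\Phi(R)=\frac{1}{R^{n-kp}}\int_{B_{R}(x_{0})}e_{g}(\psi)\dvol_{g},
\end{align*}
and invoke the final clause of Theorem \ref{monformtheorem}: since $M=\mathbb{H}^{n}_{-\kappa^{2}}$ and $\psi$ is $p$-harmonic, the inequality \eqref{monformgen} holds with $\Lambda=0$ for all $R>0$, so $\Phi$ is nondecreasing on $\left]0,\infty\right[$. Next I would translate the hypothesis $\int_{B_{R}(x_{0})}e_{g}(\psi)\dvol_{g}=o(R^{n-kp})$ into the statement $\Phi(R)\to 0$ as $R\to\infty$. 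Combining these two facts, for any fixed $R_{0}>0$ and all $R\geq R_{0}$ we have $0\leq\Phi(R_{0})\leq\Phi(R)$, and letting $R\to\infty$ forces $\Phi(R_{0})\leq 0$, hence $\Phi(R_{0})=0$. Since $e_{g}(\psi)\geq 0$ and $R_{0}>0$ was arbitrary, this gives $\int_{B_{R_{0}}(x_{0})}e_{g}(\psi)\dvol_{g}=0$ for every $R_{0}$, so $e_{g}(\psi)\equiv 0$ on $M$ and therefore $\psi\equiv 0$.

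This argument is essentially immediate once the monotonicity is in hand, so I do not expect any genuine obstacle; the only point requiring mild care is to confirm that monotonicity really holds on the \emph{entire} ray $\left]0,\infty\right[$ rather than just on $\left]0,\tfrac{i_{0}}{2}\right[$. This is guaranteed precisely because $i_{0}=\infty$ on $\mathbb{H}^{n}_{-\kappa^{2}}$ and because the final claim of Theorem \ref{monformtheorem} establishes \eqref{monformgen} for all $R>0$ with $\Lambda=0$ in this setting; without the global, curvature-exact version of the estimate the limiting step would break down. Given that, the nonnegativity of $e_{g}(\psi)$ closes the argument with no further calculation.
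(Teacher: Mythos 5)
Your proposal is correct and follows essentially the same route as the paper: invoke the final clause of Theorem \ref{monformtheorem} (with $\Lambda=0$ and $i_{0}=\infty$ on $\mathbb{H}^{n}_{-\kappa^{2}}$) to get monotonicity of $R^{-(n-kp)}\int_{B_{R}(x_{0})}e_{g}(\psi)\dvol_{g}$ on all of $\left]0,\infty\right[$, then squeeze against the $o(R^{n-kp})$ hypothesis. No gaps.
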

\begin{proof}
By Theorem \ref{monformtheorem}, it is clear from (\ref{monformgen}) that for all $0<R_{0}<R$ and $x_{0}\in M$,
\begin{align*}
	\frac{1}{R_{0}^{n-kp}}\int_{B_{R_{0}}(x_{0})}e_{g}(\psi)\dvol_{g}\leq \frac{1}{R^{n-kp}}\int_{B_{R}(x_{0})}e_{g}(\psi)\dvol_{g}\xrightarrow{R\rightarrow\infty}0
\end{align*}
so that $e_{g}(\psi)\equiv 0$.
\end{proof}
\begin{thm} Suppose $M=\mathbb{H}^{n}_{-\kappa^{2}}$ and $(\omega,u)$ is a Yang-Mills-Higgs pair. If
	\begin{equation*}
		\int_{B_{R}(x_{0})}\widetilde{e}_{g}(\omega,u)\dvol_{g}=o(R^{n-4})
	\end{equation*}
as $R\rightarrow\infty$, then $\omega$ is flat and $u$ is parallel with respect to $\nabla^{0}$. Thus, for an appropriate global section $\sigma:M\rightarrow P$, $\sigma^{\ast}\omega=0$ and $u$ may be represented as a constant function on $M$ relative to the trivialization of $E_{0}$ induced by $\sigma$.
\end{thm}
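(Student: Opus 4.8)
The plan is to follow the pattern of the preceding Liouville theorem, supplementing the energy-vanishing argument with a gauge-theoretic step that extracts the structural conclusion. First I would appeal to the monotonicity formula of Theorem~\ref{monformymhthm}: since $M=\mathbb{H}^{n}_{-\kappa^{2}}$, the inequality (\ref{monformgenymh}) holds for every $R>0$ with $\Lambda=0$, so $R\mapsto R^{-(n-4)}\int_{B_{R}(x_{0})}\widetilde{e}_{g}(\omega,u)\dvol_{g}$ is nondecreasing. For any fixed $R_{0}>0$ this yields, for all $R>R_{0}$,
\begin{align*}
\frac{1}{R_{0}^{n-4}}\int_{B_{R_{0}}(x_{0})}\widetilde{e}_{g}(\omega,u)\dvol_{g}\leq \frac{1}{R^{n-4}}\int_{B_{R}(x_{0})}\widetilde{e}_{g}(\omega,u)\dvol_{g},
\end{align*}
and the growth hypothesis $\int_{B_{R}(x_{0})}\widetilde{e}_{g}(\omega,u)\dvol_{g}=o(R^{n-4})$ sends the right-hand side to $0$ as $R\to\infty$. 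Since the left-hand side is nonnegative and independent of $R$, it must vanish for every $R_{0}$, whence $\widetilde{e}_{g}(\omega,u)\equiv 0$.

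Next I would use that $\widetilde{e}_{g}(\omega,u)=\frac{1}{2}|\underline{\Omega}^{\omega}|^{2}+\frac{1}{2}|\D^{\nabla^{0}}u|^{2}+W\circ|u|^{2}$ is a sum of three nonnegative terms, recalling that $W$ takes values in $\left[0,\infty\right[$. Its identical vanishing therefore forces each summand to vanish: $\underline{\Omega}^{\omega}\equiv 0$, which is precisely the statement that $\omega$ is flat, and $\D^{\nabla^{0}}u\equiv 0$; since $u$ is a zero-form section, the latter says $\nabla^{0}u\equiv 0$, i.e. $u$ is parallel. This establishes the first assertion.

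For the structural statement I would use that $M=\mathbb{H}^{n}_{-\kappa^{2}}$ is diffeomorphic to the upper half-space $\mathbb{R}^{n}_{+}$, hence contractible and in particular simply connected. Fixing $p_{0}$ in the fibre of $P$ over $x_{0}$ and parallel-transporting it along paths defines a candidate global section $\sigma:M\to P$; flatness together with simple connectivity guarantees trivial holonomy, so that $\sigma(x)$ is independent of the path chosen and $\sigma$ is a smooth horizontal section, whence $\sigma^{\ast}\omega=0$. In the trivialization of $E_{0}$ induced by $\sigma$ the connection $\nabla^{0}$ becomes the flat product connection $\D$, so the relation $\nabla^{0}u\equiv 0$ becomes $\D u\equiv 0$, i.e. $u$ is represented by a constant element of $V$.

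The hard part will be the last step --- rigorously upgrading pointwise flatness to a global trivialization. One must check that parallel transport produces a well-defined smooth global section, which rests on the standard fact that a flat connection over a simply connected base has trivial holonomy and thus admits a global parallel frame; the contractibility of $\mathbb{H}^{n}_{-\kappa^{2}}$ simultaneously ensures that $P$ itself is trivial, so that no topological obstruction to the existence of $\sigma$ arises. The energy-vanishing step, by contrast, is a routine repetition of the previous Liouville argument.
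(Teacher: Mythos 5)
Your argument is correct and follows essentially the same route as the paper: the energy-vanishing step is the verbatim repetition of the preceding Liouville theorem, and the structural conclusion is obtained from flatness plus the simple-connectedness of $\mathbb{R}^{n}_{+}$. The only difference is that where you sketch the parallel-transport/trivial-holonomy construction of the horizontal section $\sigma$ by hand, the paper simply cites \cite[Corollary 9.2]{MR0152974} for exactly this fact, so the step you flag as ``the hard part'' is standard and requires no further work.
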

\begin{proof}
	The first claim follows from $\widetilde{e}_{g}(\omega,u)=0$, which is established exactly as in the preceding theorem, and the latter from \cite[Corollary 9.2]{MR0152974}, since the simply-connectedness of $\mathbb{R}^{n}_{+}$ and $\underline{\Omega}^{\omega}\equiv 0$ imply that there exists a global section $\sigma:M\rightarrow P$ of $P$ such that $\sigma^{\ast}\omega=0$, whence $\D^{\nabla}$ reduces to the usual exterior derivative (acting on vector-valued differential forms) when considered in the trivialisation of $E_{0}$ induced by $\sigma$ so that $u$ may be represented by a constant function relative to this trivialisation.
\end{proof}

\end{document}